\newtheorem{theorem}{Theorem}[section]
\newtheorem{lemma}[theorem]{Lemma}
\newtheorem{proposition}[theorem]{Proposition}
\theoremstyle{definition}
\theoremstyle{remark}
\newtheorem{remark}[theorem]{Remark}
\newcommand{\ee}{{\mathrm e}}
\newcommand{\ii}{{\mathrm i}}
\newcommand{\hyper}[5]
       {{}_{#1} F_{#2} \!\left[
           \begin{array}{l}
         #3;\\#4;
           \end{array}#5\right]
       }
\newcommand{\LL}{\mathrm{L}}
\newcommand{\BB}[1]{\mathbb{#1}}
\newcommand{\CC}[1]{\mathrm{#1}}
\renewcommand{\O}[1]{\mathcal{O}\left( #1 \right)}
\newcommand{\R}[1]{\eqref{#1}}
\newcommand{\D}{\mathrm{d}}
\renewcommand{\ee}{\mathrm{e}}
\renewcommand{\ii}{\mathrm{i}}
\begin{document}
\title{A family of orthogonal rational functions and other orthogonal systems with a skew-Hermitian differentiation matrix}
\author{Arieh Iserles\\
Department of Applied Mathematics and Theoretical Physics\\
Centre for Mathematical Sciences\\
University of Cambridge\\
Wilberforce Rd, Cambridge CB4 1LE\\
United Kingdom
 \and 
 Marcus Webb\\
 Department of Mathematics\\
 University of Manchester\\
 Alan Turing Building, Manchester M13 9PL\\
 United Kingdom}
 
\thispagestyle{empty}
\maketitle

\begin{abstract}
  In this paper we explore orthogonal systems in $\CC{L}_2(\BB{R})$ which give rise to a skew-Hermitian, tridiagonal differentiation matrix. Surprisingly, allowing the differentiation matrix to be complex leads to a particular family of rational orthogonal functions with favourable properties: they form an orthonormal basis for $\CC{L}_2(\BB{R})$,  have a simple explicit formulae as rational functions, can be manipulated easily and the expansion coefficients are equal to classical Fourier coefficients of a modified function, hence can be calculated rapidly. We show that this family of functions is essentially the only orthonormal basis possessing  a differentiation matrix of the above form and whose coefficients are equal to classical Fourier coefficients of a modified function though a monotone, differentiable change of variables. 
  Examples of other orthogonal bases with skew-Hermitian, tridiagonal differentiation matrices are discussed as well.
\end{abstract}

\noindent \textbf{Keywords} \, Orthogonal systems, orthogonal rational functions, spectral methods, Fast Fourier Transform, malmquist-Takenaka system

\noindent \textbf{AMS classification numbers} \, Primary: 41A20, Secondary: 42A16, 65M70, 65T50

\section{Introduction}

The motivation for this paper is the numerical solution of time-dependent partial differential equations on the real line. It continues an ongoing project of the present authors, begun in \cite{iserles18osssd}, which studied orthonormal systems $\Phi = \{\varphi_n\}_{n\in\BB{Z}}$ in $\CC{L}_2(\BB{R})$ which satisfy the differential-difference relation,
\begin{equation}
  \label{eq:skew}
  \varphi_n'(x) = -b_{n-1} \varphi_{n-1}(x) + b_n\varphi_{n+1}(x), \qquad n \in \BB{Z}_+,
\end{equation}
for some real, nonzero numbers $\{b_n\}_{n\in\BB{Z}}$ where $b_{-1} = 0$.  In other words, the {\em differentiation matrix\/} of $\Phi$ is skew-symmetric, tridiagonal and irreducible. The virtues of skew symmetry in this context are elaborated in \cite{hairer16nsp,iserles16jps} and \cite{iserles18osssd} -- essentially, once $\Phi$ has this feature, spectral methods based upon it typically allow for a simple proof of numerical stability and for the conservation of energy whenever the latter is warranted by the underlying PDE. The importance of tridiagonality is clear, since tridiagonal matrices lend themselves to simpler and cheaper numerical algebra. 

In this paper we generalise \R{eq:skew}, allowing for a {\em skew-Hermitian\/} differentiation matrix. In other words, we consider systems $\Phi$ of complex-valued functions such that
\begin{equation}
  \label{eq:sHerm}
  \varphi_n'(x) = -\overline{b}_{n-1}\varphi_{n-1}(x) + \ii c_n\varphi_n(x) + b_n\varphi_{n+1}(x),
\end{equation}
where $\{b_n\}_{n\in \BB{Z}_+} \subset \BB{C}$ and $\{c_n\}_{n\in\BB{Z}_+} \subset \BB{R}$. 

While the substantive theory underlying the characterisation of orthonormal systems in $\CC{L}_2(\BB{R})$ with skew-Hermitian, tridiagonal, irreducible differentiation matrices is a fairly straightforward extension of \cite{iserles18osssd}, its ramifications are new and, we believe, important. In Section~2 we establish this theory, characterising $\Phi$ as Fourier transforms of weighted orthogonal polynomials with respect to some absolutely-continuous Borel measure $\D\mu$. This connection is reminiscent of \cite{iserles18osssd} but an important difference is that $\D\mu$ need not be symmetric with respect to the origin: this affords us an opportunity to consider substantially greater set of candidate measures. 

An important issue is that, while the correspondence with Borel measures guarantees orthogonality and the satisfaction of \R{eq:sHerm}, it does not guarantee completeness. In general, once $\D\mu$ is determinate and supported by the interval $(a,b)$, completeness is assured in the {\em Paley--Wiener space\/} $\mathcal{PW}_{(a,b)}(\BB{R})$. 

So far, the material of this paper represents a fairly obvious generalisation of \cite{iserles18osssd}. Furthermore, the operation of differentiation for functions on the real line is defined without venturing into the complex plane. Indeed, it is legitimate to challenge why we should allow our differentiation matrices to contain complex numbers. After all, if skew-Hermitian framework is so similar to the (simpler!) skew-symmetric one, why bother? The only possible justification is were \R{eq:sHerm} to confer an advantage (in particular, from the standpoint of computational mathematics) in comparison with \R{eq:skew}. This challenge is answered in Section~3 , where we consider sets $\Phi$ associated with generalised Laguerre polynomials, where $(a,b)=(0,\infty)$. We show that a simple tweak to our setting assures the completeness of these {\em Fourier--Laguerre functions,\/} which need be indexed over $\BB{Z}$, rather than $\BB{Z}_+$.

The Fourier--Laguerre functions in their full generality, while expressible in terms of the Szeg\H{o}--Askey polynomials on the unit circle, are fairly complicated. However, in the case of the simple Laguerre measure $\D\mu(x)=\chi_{(0,\infty)}(x)\ee^{-x}\D x$ they reduce to the {\em Malmquist--Takenaka (MT) system\/}
\begin{equation}
  \label{MT}
  \varphi_n(x)=\sqrt{\frac{2}{\pi}}\ii^n \frac{(1+2\ii x)^n}{(1-2\ii x)^{n+1}},\qquad n\in\BB{Z}.
\end{equation}
The MT system has been discovered independently by \citeasnoun{malmquist26stc} and \citeasnoun{takenaka25oof} and investigated by many mathematicians, in different contexts: approximation theory \cite{bultheel03fat,bultheel99orf,higgins2004completeness,weideman95tao}, harmonic analysis \cite{eisner14dom,pap15ecm}, signal processing \cite{wiener1949extrapolation} and spectral methods \cite{christov1982complete}. Some of these references are aware of the original work of Malmquist and Takenaka, while others reinvent the construct.


A remarkable property of the MT system \R{MT} is that the computation of the expansion coefficients
\begin{displaymath}
  \hat{f}_n=\int_{-\infty}^\infty f(x)\varphi_n(x)\D x,\qquad n\in\BB{Z},
\end{displaymath}
can be reduced, by an easy change of variables, to a standard Fourier integral. Therefore the evaluation of $\hat{f}_{-N},\ldots,\hat{f}_{N-1}$ can be accomplished with the Fast Fourier Transform (FFT) in $\O{N\log_2N}$ operations: this has been already recognised, e.g.\ in \cite{weideman95tao}. In Section~4 we characterise all systems $\Phi$, indexed over $\BB{Z}$, which tick all of the following boxes:
\begin{itemize}
\item They are orthonormal and complete in $\CC{L}_2(\BB{R})$,
\item They have a skew-Hermitian, tridiagonal differentiation matrix, and
\item Their expansion coefficients $\hat{f}_{-N},\ldots,\hat{f}_{N-1}$ can be approximated with a discrete Fourier transform by a single change of variables, and hence computed in $\O{N\log_2N}$ operations with fast Fourier transform.
\end{itemize}
Adding rigorous but reasonable assumptions to these requirements, we prove that, modulo a simple generalisation, the MT system is the \emph{only} system which bears all three. 

We wish to draw attention to \cite{iserles19fao}, a companion paper to this one. While operating there within the original framework of \cite{iserles18osssd} -- skew-symmetry rather than skew-Hermicity -- we seek therein to characterise orthonormal systems in $\CC{L}_2(\BB{R})$ whose first $N$ coefficients can be computed in $\O{N\log_2N}$ operations by fast expansion in orthogonal polynomials. We identify there a number of such systems, all of which can be computed by a mixture of fast cosine and fast sine transforms. Such systems are direct competitors to the Malmquist--Takenaka system, discussed in this paper.

\setcounter{equation}{0}
\section{Orthogonal systems with a skew-Hermitian differentiation matrix}

\subsection{Skew-Hermite differentiation matrices and Fourier transforms}

The subject matter of this section is the determination of verifiable conditions equivalent to the existence of a skew-Hermitian, tridiagonal, irreducible differentiation matrix \R{eq:sHerm} for a system $\Phi=\{\varphi_n\}_{n\in\BB{Z}_+}$ which is orthonormal in $\CC{L}_2(\BB{R})$.

\begin{theorem}[Fourier characterisation for $\Phi$]
  \label{thm:nonorthogonalPhi}
  The set  $\Phi = \{\varphi_n\}_{n\in\BB{Z}_+}\subset\CC{L}_2(\BB{R})$ has a skew-Hermitian, tridiagonal, irreducible differentiation matrix \R{eq:sHerm} if and only if
  \begin{equation}
  \label{explicit_phi}
  \varphi_n(x) = \frac{\ee^{\ii \theta_n}}{\sqrt{2\pi}} \int_{-\infty}^\infty \ee^{\ii x \xi} p_n(\xi) g(\xi) \, \D \xi,
  \end{equation}
  where $P = \{p_n\}_{n\in\BB{Z}_+}$ is an orthonormal polynomial system on the real line with respect to a non-atomic probability measure $\mathrm{d}\mu$ with all finite moments\footnote{By this, we mean that $\mathrm{d}\mu$ is Borel measure on the real line with total mass equal to 1 and with an uncountable number of points of increase (for example all of $\BB{R}$ or the interval $[-1,1]$).}, $g$ is a square-integrable function which decays superalgebraically fast as $|\xi| \to \infty$, and $\{\theta_n\}_{n\in\BB{Z}_+}$ is a sequence of numbers in $[0,2\pi)$. Furthermore, $P$, $g$, and $\{\theta_n\}_{n\in\BB{Z}_+}$ are uniquely determined by $\varphi_0$, $\{c_n\}_{n\in\BB{Z}_+}$, and $\{b_n\}_{n\in\BB{Z}_+}$\footnote{We assume by convention that the leading coefficients of the elements of $P$ are positive.}.
\end{theorem}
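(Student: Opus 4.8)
The plan is to establish the equivalence \R{eq:sHerm} $\Longleftrightarrow$ \R{explicit_phi} by passing to the Fourier transform, where the differential-difference relation becomes an algebraic three-term recurrence, and then to invoke Favard's theorem. First I would introduce $\psi_n = \mathcal{F}^{-1}\varphi_n$, the inverse Fourier transform of $\varphi_n$, so that $\varphi_n(x) = \frac{1}{\sqrt{2\pi}}\int_{-\infty}^\infty \ee^{\ii x\xi}\psi_n(\xi)\,\D\xi$. Orthonormality of $\Phi$ in $\CC{L}_2(\BB{R})$ is equivalent, by Plancherel, to orthonormality of $\{\psi_n\}$ in $\CC{L}_2(\BB{R})$. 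Differentiation $\varphi_n \mapsto \varphi_n'$ corresponds to multiplication $\psi_n(\xi)\mapsto \ii\xi\,\psi_n(\xi)$, so \R{eq:sHerm} becomes the pointwise identity
\begin{equation*}
  \ii\xi\,\psi_n(\xi) = -\overline{b}_{n-1}\psi_{n-1}(\xi) + \ii c_n\psi_n(\xi) + b_n\psi_{n+1}(\xi).
\end{equation*}
Writing $\psi_n(\xi) = \ee^{\ii\theta_n} q_n(\xi) g(\xi)$ for a common factor $g$ to be identified and phases $\theta_n$, the aim is to absorb the phases and the $\ii$'s so that the $q_n$ satisfy a genuine real three-term recurrence $\xi q_n = \beta_{n-1} q_{n-1} + \alpha_n q_n + \beta_n q_{n+1}$ with $\beta_n > 0$; this forces $q_n$ to be polynomials of exact degree $n$ (inductively, starting from $q_0$ a nonzero constant, which we normalise), and Favard's theorem then produces the probability measure $\D\mu$ for which $P=\{p_n=q_n\}$ is the orthonormal polynomial system. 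The function $g$ is then $\psi_0$ up to its constant value, and superalgebraic decay of $g$ is needed precisely so that $p_n g \in \CC{L}_2(\BB{R})$ for every $n$ (polynomial growth times $g$ must remain square-integrable), which is exactly the statement that all moments of $\D\mu$ are finite.

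For the converse direction, starting from \R{explicit_phi}, I would differentiate under the integral sign, use $\xi p_n(\xi) = b_{n-1} p_{n-1}(\xi) + a_n p_n(\xi) + b_n p_{n+1}(\xi)$ (the recurrence for orthonormal polynomials, with $b_n>0$), and read off that $\varphi_n' $ satisfies \R{eq:sHerm} with the coefficients in \R{eq:sHerm} built from the recurrence coefficients $a_n, b_n$ and the phases $\theta_n$; one checks the Hermitian-transpose bookkeeping matches, i.e. that the subdiagonal entry is the conjugate of the superdiagonal entry, which is automatic since the $b_n$ are real and positive and the phase contributes $\ee^{\ii(\theta_{n+1}-\theta_n)}$ on the one side and its conjugate on the other. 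Irreducibility corresponds to all $b_n \neq 0$, which holds for orthonormal polynomials with respect to a non-atomic measure.

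The uniqueness claim in the final sentence is then almost a corollary of the construction. Given $\varphi_0$, its inverse Fourier transform is $\ee^{\ii\theta_0}p_0 g$ with $p_0$ a positive constant determined by the normalisation $\|p_0\|_{\D\mu}=1$; but $g$ and $\D\mu$ are not yet separated. Here I would argue that the recurrence coefficients $\{b_n\}$, $\{c_n\}$ determine, via the algebraic recurrence above, the Jacobi matrix entries $\{a_n,b_n\}$ (after stripping phases), and the $\theta_n$ are pinned down recursively by the requirement that the recurrence for the $q_n$ be real with positive off-diagonal; the Jacobi parameters then determine $\D\mu$ uniquely provided the moment problem is determinate — and the convention that $\D\mu$ is a probability measure with the stated support fixes the normalisation. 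Finally $g$ is recovered as $\psi_0/(\ee^{\ii\theta_0}p_0)$, hence uniquely. I expect the main obstacle to be the careful tracking of the phase factors $\{\theta_n\}$: one must show the $\theta_n$ can and must be chosen so that after the substitution $\psi_n = \ee^{\ii\theta_n} q_n g$ the transformed recurrence is simultaneously real-coefficiented and symmetric, and that this pins them down uniquely (up to the harmless ambiguity absorbed into the statement's allowance of $\theta_n\in[0,2\pi)$ and the positive-leading-coefficient convention). A secondary subtlety is justifying differentiation under the integral sign and the $\CC{L}_2$ manipulations, which is where superalgebraic decay of $g$ does the work and should be stated as a standing hypothesis rather than belaboured.
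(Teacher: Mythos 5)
Your proposal follows essentially the same route as the paper: the sufficiency direction (differentiating under the integral sign and reading off the skew-Hermitian relation from the three-term recurrence for orthonormal polynomials, with the phases $\ee^{\ii\theta_n}$ supplying the conjugate-transpose bookkeeping) is exactly the paper's argument, and the necessity and uniqueness parts, which the paper explicitly defers to a modification of the proof in \cite{iserles18osssd} via Favard's theorem and properties of the Fourier transform, are sketched by you in precisely that deferred manner. The only quibble is your parenthetical claim that superalgebraic decay of $g$ \emph{is} the finiteness of the moments of $\D\mu$ --- these are separate conditions (the former makes $p_n g$ square-integrable with respect to Lebesgue measure so that each $\varphi_n$ lies in $\CC{L}_2(\BB{R})$, while the latter concerns integrals against $\D\mu$ and comes for free from Favard's theorem) --- but this does not affect the structure of the argument.
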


\begin{remark}
This theorem is a straightforward generalisation of \cite[Thm.~6]{iserles18osssd}, which shows the same result but for real, irreducible skew-symmetric differentiation matrices. The difference is that  \R{eq:sHerm}  is replaced by \R{eq:skew}, $\mathrm{d}\mu$ must be even, $g$ must have even real part and odd imaginary part, and $\theta_n$ is chosen so that $\ee^{\ii \theta_n} = (-\ii)^n$. We will prove sufficiency because it is elementary but enlightening, and leave necessity and uniqueness for the reader to prove by modifying the proof in \cite{iserles18osssd}. That part of the proof depends on Favard's theorem and properties of the Fourier transform, and we wish  to avoid it for the sake of brevity.
\end{remark}

\begin{proof} Suppose that $\varphi_n$ are given by the equation \eqref{explicit_phi}. Then by \cite[Thm.~1.29]{gautschi2004orthogonal} there exist real numbers $\{\delta_n\}_{n \in\BB{Z}_+}$ and positive numbers $\{\beta_n\}_{n\in\BB{Z}_+}$ such that
  \begin{equation}
   \label{TTRR}
  \xi p_n(\xi) = \beta_{n-1} p_{n-1}(\xi) + \delta_n p_n(\xi) + \beta_n p_{n+1}(\xi), \qquad n \in \BB{Z}_+,
  \end{equation}
  where $\beta_{-1} = 0$ by convention.\footnote{This form \R{TTRR} of the three-term recurrence relation for $P$ ensures orthonormality of the underlying orthogonal polynomials.} Differentiating under the integral sign and using the above three-term recurrence, we obtain
  \begin{displaymath}
  \varphi'_n(x) = \ii \ee^{\ii(\theta_n - \theta_{n-1})} \beta_{n-1} \varphi_{n-1}(x) + \ii\delta_n \varphi_n(x) + \ii \ee^{\ii(\theta_n - \theta_{n+1})}\beta_n \varphi_{n+1}(x).
  \end{displaymath}
  Set $c_n = \delta_n$ and $b_n = \ii \ee^{\ii(\theta_n - \theta_{n+1})}\beta_n$ for $n \in \BB{Z}_+$. Then $c_n \in \BB{R}$ and $-\overline{b}_{n-1} = -(-\ii)\ee^{\ii(\theta_n - \theta_{n-1})}\beta_{n-1} = \ii \ee^{\ii(\theta_n - \theta_{n-1})} \beta_{n-1}$, so that $\Phi$ satisfies equation \eqref{eq:sHerm}.
  \end{proof}

Theorems \ref{thm:orthogonalPhi} and \ref{thm:PW} are proved in \cite{iserles18osssd} for the real case, as in equation \eqref{eq:skew}. The proofs require minimal modification for them to apply to the complex case, as in equation \eqref{eq:sHerm}.

\begin{theorem}[Orthogonal systems]\label{thm:orthogonalPhi}
  Let $\Phi = \{\varphi_n\}_{n\in\BB{Z}_+}$ satisfy the requirements of Theorem \ref{thm:nonorthogonalPhi}. Then $\Phi$ is orthogonal in $\mathrm{L}_2(\BB{R})$ if and only if $P$ is orthogonal with respect to the measure $|g(\xi)|^2\mathrm{d}\xi$. Furthermore, whenever $\Phi$ is orthogonal, the functions $\varphi_n / \|g\|_{2}$ are orthonormal.
\end{theorem}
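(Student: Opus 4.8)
The plan is to derive everything from the unitarity of the Fourier transform. Write $\psi_n(\xi) := p_n(\xi) g(\xi)$; since $g \in \CC{L}_2(\BB{R})$ decays superalgebraically, multiplication by the polynomial $p_n$ leaves it in $\CC{L}_2(\BB{R})$, so each $\psi_n$ is square-integrable and, by \R{explicit_phi}, $\varphi_n = \ee^{\ii\theta_n}\widehat{\psi_n}$, the hat denoting the unitary Fourier transform $\xi\mapsto x$ implicit in \R{explicit_phi}. Plancherel's theorem then gives, for all $m,n\in\BB{Z}_+$,
\begin{displaymath}
  \langle \varphi_m,\varphi_n\rangle_{\CC{L}_2(\BB{R})} = \ee^{\ii(\theta_m-\theta_n)}\langle \psi_m,\psi_n\rangle_{\CC{L}_2(\BB{R})} = \ee^{\ii(\theta_m-\theta_n)}\int_{-\infty}^\infty p_m(\xi)\,p_n(\xi)\,|g(\xi)|^2\,\D\xi,
\end{displaymath}
where the last equality uses that $p_n$ has real coefficients, hence is real-valued on $\BB{R}$, so that $\overline{\psi_n(\xi)} = p_n(\xi)\overline{g(\xi)}$.

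From this identity the first assertion is immediate: the right-hand side vanishes for every $m\neq n$ precisely when $P$ is an orthogonal polynomial system with respect to the finite positive measure $|g(\xi)|^2\,\D\xi$, which is exactly the claim. For the normalisation, assume $\Phi$ is orthogonal, so that $P$ is orthogonal both with respect to $\D\mu$ (orthonormally, by the hypotheses of Theorem~\ref{thm:nonorthogonalPhi}) and with respect to $\D\nu := |g(\xi)|^2\,\D\xi$, and recall that $p_0\equiv 1$ because it is constant with $\int p_0^2\,\D\mu=1$ and positive leading coefficient. Expanding an arbitrary polynomial $q$ in the basis $\{p_k\}$ and using $\int p_k\,\D\mu = \int p_k p_0\,\D\mu = 0$ and $\int p_k\,\D\nu = \int p_k p_0\,\D\nu = 0$ for $k\ge 1$, one sees that $\int q\,\D\mu$ and $\|g\|_2^{-2}\int q\,\D\nu$ both equal the coefficient of $p_0$ in $q$ (the second using $\nu(\BB{R})=\|g\|_2^2$). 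Taking $q=p_n^2$ gives $\int p_n^2\,|g|^2\,\D\xi = \|g\|_2^2\int p_n^2\,\D\mu = \|g\|_2^2$, whence, by the displayed identity with $m=n$, $\|\varphi_n\|_{\CC{L}_2(\BB{R})}^2 = \|g\|_2^2$, so $\varphi_n/\|g\|_2$ is orthonormal.

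There is no genuine obstacle in this argument; the only two points that deserve care are (i) the legitimacy of applying Plancherel, guaranteed by the superalgebraic decay of $g$ which places $p_n g$ in $\CC{L}_2(\BB{R})$, and (ii) the normalisation bookkeeping, where one should resist invoking determinacy of $\D\mu$ and instead use the elementary two-measure identity above, valid for any pair of measures sharing the same orthogonal polynomials. Everything else reduces to a single line of Fourier analysis.
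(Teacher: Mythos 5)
Your proof is correct and follows essentially the approach the paper itself relies on: the paper omits the proof of this theorem, deferring to \cite{iserles18osssd}, where the argument is precisely the Plancherel reduction of $\langle\varphi_m,\varphi_n\rangle$ to $\ee^{\ii(\theta_m-\theta_n)}\int_{-\infty}^\infty p_m(\xi)p_n(\xi)|g(\xi)|^2\,\D\xi$ that you give. Your elementary two-measure identity yielding $\|\varphi_n\|^2=\|g\|_2^2$ without invoking determinacy of $\D\mu$ is a clean way to handle the normalisation, and all the supporting details (square-integrability of $p_ng$, realness of the $p_n$) check out.
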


\begin{theorem}[Orthogonal bases for a Paley--Wiener space]\label{thm:PW}
  Let $\Phi = \{\varphi_n\}_{n\in\BB{Z}_+}$ satisfy the requirements of Theorem \ref{thm:orthogonalPhi} with a measure $\mathrm{d}\mu$ such that polynomials are dense in $\CC{L}_2(\BB{R};\mathrm{d}\mu)$. Then $\Phi$ forms an orthogonal basis for the Paley--Wiener space $\mathcal{PW}_\Omega(\BB{R})$, where $\Omega$ is the support of $\mathrm{d}\mu$.
\end{theorem}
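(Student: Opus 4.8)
The plan is to recognise that the claim is essentially a transcription of the Plancherel theorem for the Fourier transform combined with the definition of the Paley--Wiener space. First I would recall that, by Theorem~\ref{thm:orthogonalPhi}, the functions $\psi_n := \varphi_n/\|g\|_2$ are orthonormal in $\CC{L}_2(\BB{R})$ and, up to the unimodular factor $\ee^{\ii\theta_n}$, each $\psi_n$ is the (inverse) Fourier transform of $p_n(\xi)g(\xi)/\|g\|_2$. Since $g$ is supported on $\Omega$ (being a scalar multiple of a square root of the density of $\D\mu$, whose support is $\Omega$ by hypothesis), each function $p_n g$ lies in $\CC{L}_2(\BB{R};\D\xi)$ and is supported in $\Omega$; hence each $\psi_n$ lies in $\mathcal{PW}_\Omega(\BB{R})$, which by definition is the image under the Fourier transform of $\CC{L}_2(\Omega)=\{h\in\CC{L}_2(\BB{R}): \operatorname{supp} h\subseteq\Omega\}$. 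So $\Phi$ (rescaled) is an orthonormal subset of $\mathcal{PW}_\Omega(\BB{R})$, and only completeness remains.

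For completeness I would exploit that the Fourier transform is a unitary isomorphism from $\CC{L}_2(\Omega)$ onto $\mathcal{PW}_\Omega(\BB{R})$. Under this isometry, the claim that $\{\psi_n\}_{n\in\BB{Z}_+}$ spans $\mathcal{PW}_\Omega(\BB{R})$ is equivalent to the claim that $\{p_n(\xi)g(\xi)/\|g\|_2\}_{n\in\BB{Z}_+}$ spans $\CC{L}_2(\Omega)$. Now for any $h\in\CC{L}_2(\Omega)$, write $h=(h/g)\cdot g$ on the set where $g\neq0$; the hypothesis that polynomials are dense in $\CC{L}_2(\BB{R};\D\mu)=\CC{L}_2(\BB{R};|g|^2\D\xi)$ (using $\D\mu$ and $|g|^2\D\xi$ interchangeably up to the positive constant $\|g\|_2^2$, as in Theorem~\ref{thm:orthogonalPhi}) means precisely that every function in $\CC{L}_2(|g|^2\D\xi)$ — in particular $h/g$ — can be approximated in that weighted norm by polynomials $q_k$. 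But $\|q_k\cdot g - h\|_{\CC{L}_2(\Omega;\D\xi)} = \|q_k - h/g\|_{\CC{L}_2(|g|^2\D\xi)}\to0$, so $h$ lies in the closed span of $\{p_n g\}$ (the $p_n$ and the monomials span the same polynomial space). Hence $\{p_n g\}$ is complete in $\CC{L}_2(\Omega)$, and applying the unitary Fourier transform gives completeness of $\Phi$ in $\mathcal{PW}_\Omega(\BB{R})$.

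The one point requiring a little care — and the step I expect to be the main obstacle — is the manipulation $h\mapsto h/g$ on the zero set of $g$. If $g$ vanishes on a set of positive measure inside $\Omega$, then $|g|^2\D\xi$ is not equivalent to Lebesgue measure on $\Omega$ and the Paley--Wiener space associated to $\operatorname{supp}(\D\mu)$ is genuinely smaller than the naive one; one must check that $\operatorname{supp}(\D\mu)$ is defined as the support of $|g|^2\D\xi$ (equivalently, the essential closure of $\{g\neq0\}$), so that off this support there is nothing to approximate, and on it $g\neq0$ a.e. With that reading of $\Omega$, the division is legitimate and the argument above goes through verbatim. I would also remark that this is exactly the argument in \cite[Thm.~8]{iserles18osssd} (or the analogous statement there), the only change being that $\D\mu$ need no longer be symmetric and $g$ need no longer have the special parity, neither of which is used anywhere in the proof.
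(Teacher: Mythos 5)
Your argument is correct and is, as far as one can tell, the intended one: the paper offers no proof of Theorem~\ref{thm:PW} at all, merely asserting that the skew-symmetric case is proved in \cite{iserles18osssd} and that the modifications for the complex case are minimal, and the required argument is exactly your combination of the Fourier--Plancherel isometry between $\CC{L}_2(\Omega)$ and $\mathcal{PW}_\Omega(\BB{R})$ with the density of polynomials in $\CC{L}_2(\BB{R};|g|^2\,\D\xi)$ transported by multiplication by $g$ --- and, as you observe, neither the evenness of $\D\mu$ nor the parity of $g$ enters anywhere, which is why the generalisation is immediate. Your closing caveat is a genuine (if pedantic) point the paper also glosses over: the topological support of $\D\mu$ can strictly contain $\{g\neq0\}$ up to null sets (e.g.\ if $g$ vanishes on a fat Cantor set inside $\Omega$), in which case the closed span of $\{p_n g\}$ is a proper subspace of $\CC{L}_2(\Omega)$, so $\Omega$ must be read as $\{g\neq0\}$ modulo null sets; this is vacuous for every measure actually used in the paper.
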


The key corollary of Theorem \ref{thm:PW} is that for a basis $\Phi$ satisfying the requirements of Theorem \ref{thm:orthogonalPhi} to be complete in $\CC{L}_2(\BB{R})$, it is necessary that the polynomial basis $P$ is orthogonal with respect to a measure which is supported on the whole real line.

\subsection{Symmetries and the canonical form}\label{subsec:symmetries}


Let $\Phi = \{\varphi_n\}_{n\in\BB{Z}_+}$ have a tridiagonal skew-Hermitian differentiation matrix as in equation \eqref{eq:sHerm}. Then the system $\tilde\Phi = \{\tilde\varphi_n\}_{n\in\BB{Z}_+}$ defined by
  \begin{equation}
  \label{Affine}
  \tilde{\varphi}_n(x) = A\ee^{\ii (\omega x + \kappa_n)} \varphi_n(B x + C),
  \end{equation}
  where $\omega, A, B, C, \kappa_n \in \BB{R}$ and $A,B \neq 0$, also satisfies equation \eqref{eq:sHerm}. We can show this directly as follows.
  \begin{eqnarray*}
     \tilde{\varphi}'_n(x) &=& AB\ee^{\ii (\omega x + \kappa_n)} \varphi'_n(B x + C) + A\ii\omega\ee^{\ii (\omega x + \kappa_n)} \varphi_n(B x + C) \\
     &=& AB\ee^{\ii (\omega x + \kappa_n)}[-\overline{b}_{n-1}\varphi_{n-1}(Bx+C) + \ii c_n\varphi_n(Bx+C) + b_n\varphi_{n+1}(Bx+C)] \\
     & & \qquad + \ii\omega A\ee^{\ii (\omega x + \kappa_n)} \varphi_n(B x + C) \\
     &=& -B\ee^{\ii(\kappa_n - \kappa_{n-1})}\overline{b}_{n-1} \tilde\varphi_{n-1}(x) + \ii (Bc_n + \omega)\tilde\varphi_n(x) + B\ee^{\ii(\kappa_n - \kappa_{n+1})}b_{n} \tilde\varphi_{n+1}(x) \\
     &=& -\overline{b}^\circ_{n-1} \tilde\varphi_{n-1}(x) + \ii c^\circ_n \tilde{\varphi}_n(x) + b^\circ_n\tilde\varphi_{n+1}(x),
    \end{eqnarray*}
  where $b^\circ_n = B\ee^{\ii(\kappa_n - \kappa_{n+1})}b_{n}$ and $c^\circ_n = Bc_n + \omega$.
  
  The parameters $\omega, A, B, C, \kappa_0,\kappa_1,\kappa_2,\ldots$ encode continuous symmetries in the space of systems with a tridiagonal skew-Hermitian differentiation matrix. Note that these symmetries also preserve orthogonality (but not necessarily orthonormality). 
  
  If the differentiation matrix is irreducible then these symmetries permit a unique choice of $\kappa_0,\kappa_1,\ldots$ ensuring that $b_n$ is a positive real number for each $n \in \BB{Z}_+$. This corresponds to modifying the choice of $\theta_n$ in Theorem \ref{thm:nonorthogonalPhi} so that $\ee^{\ii \theta_n} = \ii^n$. It is therefore possible for any given $g$ and $P$ to have a canonical choice of $\Phi$, which satisfies $b_n > 0$, by taking
\begin{displaymath}
    \varphi_n(x) = \frac{\ii^n}{\sqrt{2\pi}} \int_{-\infty}^\infty \ee^{\ii x \xi} p_n(\xi) g(\xi)  \D \xi.
  \end{displaymath}

We can also produce a unique canonical orthonormal system from an absolutely continuous measure $\mathrm{d}\mu(\xi) = w(\xi) \mathrm{d}\xi$ on the real line, where $w(\xi)$ decays superalgebraically fast as $|\xi| \to \infty$. Specifically, the functions
\begin{equation}
    \label{p_to_phi}
    \varphi_n(x) = \frac{\ii^n}{\sqrt{2\pi}} \int_{-\infty}^\infty \ee^{\ii x \xi} p_n(\xi) |w(\xi)|^{\frac12} \D \xi
\end{equation}
form an orthonormal system in $\CC{L}_2(\BB{R})$ with a tridiagonal, irreducible skew-Hermitian differentiation matrix with a positive superdiagonal. The system is dense in $\CC{L}_2(\BB{R})$ if $P$ is dense in $L^2(\BB{R},w(\xi)\mathrm{d}\xi)$.

\subsection{Computing $\Phi$}

We proved in \cite{iserles18osssd} that any system $\Phi$ of $\CC{L}_2(\BB{R})\cap\CC{C}^\infty(\BB{R})$ functions that obey \R{eq:skew} obeys the relation
\begin{equation}
  \label{SkewDervs}
  \varphi_n(x)=\frac{1}{b_0b_1\cdots b_{n-1}} \sum_{\ell=0}^{\lfloor n/2\rfloor} \alpha_{m,\ell} \varphi_0^{(n-2\ell)}(x),\qquad n\in\BB{Z}_+,
\end{equation}
where
\begin{displaymath}
  \alpha_{n+1,0}=1,\qquad \alpha_{n+1,\ell}=b_{n-1}^2\alpha_{n-1,\ell-1}+\alpha_{n,\ell},\quad \ell=1\ldots\left\lfloor\frac{n+1}{2}\right\rfloor\!.
\end{displaymath}
Our setting lends itself to similar representation, which follows from \R{eq:sHerm} by induction.

\begin{lemma}
  The functions $\Phi$ consistent with \R{eq:sHerm} satisfy the relation
  \begin{equation}
    \label{sHermDervs}
    \varphi_n(x)=\frac{1}{b_0b_1\cdots b_{n-1}} \sum_{\ell=0}^n \beta_{n,\ell} \varphi_0^{(\ell)}(x),\qquad n\in\BB{Z}_+,
  \end{equation}
  where $\beta_{0,0}=\beta_{1,1}=1$ , $\beta_{1,0}=-\ii c_0$ and
  \begin{eqnarray*}
    \beta_{n+1,0}&=&b_{n-1}^2\beta_{n-1,0}-\ii c_n\beta_{n,0},\\
    \beta_{n+1,\ell}&=&\beta_{n,\ell-1}+b_{n-1}^2\beta_{n-1,\ell}-\ii c_n\beta_{n,\ell},\quad \ell=1,\ldots,n+1
  \end{eqnarray*}
  for $n\in\BB{N}$.
\end{lemma}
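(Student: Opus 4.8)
The plan is to establish \R{sHermDervs} by a two-step induction on $n$, with base cases $n=0,1$, the inductive step being a direct substitution into the rearranged form of the three-term relation \R{eq:sHerm}. Throughout we may assume we are in the canonical form of Subsection~\ref{subsec:symmetries}, so that each $b_n$ is a positive real number; then $\overline{b}_{n-1}=b_{n-1}$ and the quantity appearing in the recurrence is genuinely $b_{n-1}^2$ (in the non-canonical case one simply reads $b_{n-1}^2$ as $\overline{b}_{n-1}b_{n-1}=|b_{n-1}|^2$ throughout, and the symmetry \R{Affine} reduces matters to the canonical case in any event). Note also that \R{eq:sHerm} shows, by an immediate induction, that each $\varphi_n$ lies in $\CC{C}^\infty(\BB{R})$: its right-hand side is a combination of $\varphi_{n-1},\varphi_n,\varphi_{n+1}$, each of which is itself differentiable by the same relation. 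Hence the finite sums below may be differentiated term by term.

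For $n=0$, \R{sHermDervs} is the tautology $\varphi_0=\beta_{0,0}\varphi_0$. For $n=1$ we apply \R{eq:sHerm} with $n=0$ and $b_{-1}=0$, that is $\varphi_0'=\ii c_0\varphi_0+b_0\varphi_1$, and rearrange to $\varphi_1=\frac{1}{b_0}\bigl(\varphi_0'-\ii c_0\varphi_0\bigr)=\frac{1}{b_0}\bigl(\beta_{1,0}\varphi_0+\beta_{1,1}\varphi_0'\bigr)$ with $\beta_{1,0}=-\ii c_0$ and $\beta_{1,1}=1$, as claimed.

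For the inductive step, assume \R{sHermDervs} for the indices $n-1$ and $n$. Rearranging \R{eq:sHerm} gives
\[
  b_n\varphi_{n+1}=\varphi_n'+b_{n-1}\varphi_{n-1}-\ii c_n\varphi_n .
\]
I would then substitute the two inductive hypotheses into the right-hand side; differentiate the expression for $\varphi_n$ termwise and re-index the resulting sum by $\ell\mapsto\ell+1$; multiply the prefactor of the $\varphi_{n-1}$ term, namely $\overline{b}_{n-1}/(b_0\cdots b_{n-2})$, by $b_{n-1}/b_{n-1}$ so as to bring it to the common denominator $b_0\cdots b_{n-1}$, which produces the factor $b_{n-1}^2$; and finally divide through by $b_n$ to obtain $\varphi_{n+1}=\frac{1}{b_0\cdots b_n}\sum_{\ell}(\cdots)\varphi_0^{(\ell)}$. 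Reading off the coefficient of $\varphi_0^{(\ell)}$, with the convention $\beta_{m,\ell}=0$ for $\ell<0$ or $\ell>m$, gives $\beta_{n,\ell-1}+b_{n-1}^2\beta_{n-1,\ell}-\ii c_n\beta_{n,\ell}$, which is precisely the asserted recurrence once the boundary indices are handled separately: for $\ell=0$ the term $\beta_{n,-1}$ vanishes, leaving $\beta_{n+1,0}=b_{n-1}^2\beta_{n-1,0}-\ii c_n\beta_{n,0}$, while for $\ell=n+1$ only $\beta_{n,n}$ survives.

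There is no serious obstacle here. The only points requiring a little care are the bookkeeping of the three overlapping summation ranges, so that the edge coefficients $\ell=0$, $\ell=n$ and $\ell=n+1$ come out correctly, and the observation — a one-line side induction from $\beta_{0,0}=\beta_{1,1}=1$ together with the recurrence — that $\beta_{n,n}=1$ for every $n$, which is what makes the $\ell=n+1$ coefficient equal $\beta_{n+1,n+1}$ and pins down the leading term of \R{sHermDervs}.
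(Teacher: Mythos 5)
Your proof is correct and is precisely the induction the paper invokes (the paper gives no written argument beyond stating that \R{sHermDervs} ``follows from \R{eq:sHerm} by induction''): rearrange \R{eq:sHerm} for $b_n\varphi_{n+1}$, substitute the hypotheses for $\varphi_{n-1}$ and $\varphi_n$, shift the index in the differentiated sum, and read off the coefficients. Your observations that $\overline{b}_{n-1}b_{n-1}$ should be read as $b_{n-1}^2$ in the canonical normalisation (or $|b_{n-1}|^2$ otherwise) and that $\beta_{n,n}=1$ pins down the leading term are exactly the right bookkeeping.
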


Like \R{SkewDervs}, the formula \R{sHermDervs} is often helpful in the calculation of $\varphi_1,\varphi_2,\ldots$ once $\varphi_0$ is known. The obvious idea is to compute explicitly the derivatives of $\varphi_0$ and form their linear combination \R{sHermDervs}, but equally useful is a generalisation of an approach originating in \cite{iserles18osssd}. Thus, Fourier-transforming \R{sHermDervs},
\begin{displaymath}
  \hat{\varphi}_n(\xi)=\frac{\hat{\varphi}_0(\xi)}{b_0b_1\cdots b_{n-1}} \sum_{\ell=0}^n \beta_{n,\ell}(\ii\xi)^\ell.
\end{displaymath}
On the other hand, Fourier transforming \R{p_to_phi}, we have
\begin{displaymath}
  \hat{\varphi}_n(\xi)=\ii^n |w(\xi)|^{1/2} p_n(\xi).
\end{displaymath}
Our first conclusion is that $\hat{\varphi}_0(\xi)= |w(\xi)|^{1/2}/p_0$. Moreover, comparing the two displayed equations,
\begin{equation}
  \label{ExpRep0}
  \frac{1}{b_0b_1\cdots b_{n-1}}\sum_{\ell=0}^n \beta_{n,\ell} (\ii\xi)^\ell =\frac{\ii^n}{p_0} p_n(\xi).
\end{equation}
The polynomials $p_n$ are often known explicitly. In that case it is helpful to rewrite \R{sHermDervs}  in a more explicit form.

\begin{lemma}
  \label{ExplicitPhi}
  Suppose that   $p_n(\xi)=\sum_{\ell=0}^n p_{n,\ell} \xi^\ell$, $n\in\BB{Z}_+$. Then
  \begin{equation}
    \label{ExpRep}
    \varphi_n(x)=\frac{\ii^n}{p_{0,0}}\sum_{\ell=0}^n (-\ii)^\ell p_{n,\ell} \varphi_0^{(\ell)}(x),\qquad n\in\BB{Z}_+.
  \end{equation}
\end{lemma}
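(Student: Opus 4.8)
The plan is to obtain \R{ExpRep} by matching coefficients of powers of $\xi$ in the already-derived identity \R{ExpRep0}. Writing $p_n(\xi)=\sum_{\ell=0}^n p_{n,\ell}\xi^\ell$, the right-hand side of \R{ExpRep0} is $\frac{\ii^n}{p_0}\sum_{\ell=0}^n p_{n,\ell}\xi^\ell$, while its left-hand side is $\frac{1}{b_0b_1\cdots b_{n-1}}\sum_{\ell=0}^n \beta_{n,\ell}\ii^\ell\xi^\ell$. Equating coefficients of $\xi^\ell$ and using $\ii^{-1}=-\ii$ (so that $\ii^{n-\ell}=\ii^n(-\ii)^\ell$) gives
\[
\frac{\beta_{n,\ell}}{b_0b_1\cdots b_{n-1}}=\frac{\ii^n}{p_0}(-\ii)^\ell p_{n,\ell},\qquad \ell=0,\ldots,n,
\]
and, since $p_0=p_{0,0}$, substituting this into the representation \R{sHermDervs} yields \R{ExpRep} immediately.

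An alternative, slightly more self-contained route works directly from the canonical integral formula \R{p_to_phi}. Expanding $p_n$ in monomials and exchanging sum and integral,
\[
\varphi_n(x)=\frac{\ii^n}{\sqrt{2\pi}}\sum_{\ell=0}^n p_{n,\ell}\int_{-\infty}^\infty \ee^{\ii x\xi}\xi^\ell |w(\xi)|^{1/2}\,\D\xi .
\]
Since $\partial_x^\ell \ee^{\ii x\xi}=(\ii\xi)^\ell\ee^{\ii x\xi}$, we may replace $\xi^\ell \ee^{\ii x\xi}$ by $(-\ii)^\ell\partial_x^\ell \ee^{\ii x\xi}$ and pull the $\ell$-th derivative outside the integral; this is legitimate because $|w|^{1/2}$ decays superalgebraically, so every integrand is absolutely integrable and repeated differentiation under the integral sign is valid. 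The remaining integral $\int_{-\infty}^\infty \ee^{\ii x\xi}|w(\xi)|^{1/2}\,\D\xi$ equals $\sqrt{2\pi}\,\varphi_0(x)/p_{0,0}$ by the $n=0$ instance of \R{p_to_phi} (recall $p_0(\xi)\equiv p_{0,0}$), and collecting terms gives precisely \R{ExpRep}.

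There is no substantive obstacle here: the only points needing care are the bookkeeping of the factors $\ii^{-\ell}=(-\ii)^\ell$ and of the normalising constant $p_{0,0}=p_0$ (equivalently, the identity $|w(\xi)|^{1/2}=\hat\varphi_0(\xi)/p_{0,0}$ implicit in \R{p_to_phi}), together with the routine justification of differentiating and interchanging under the integral sign, which follows at once from the superalgebraic decay assumed throughout.
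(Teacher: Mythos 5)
Your first argument is exactly the paper's proof (the paper's one-liner ``by \R{sHermDervs}, substituting the explicit form of $p_n$ in \R{ExpRep0}'' is precisely your coefficient-matching computation, spelled out), and the algebra with $\ii^{-\ell}=(-\ii)^\ell$ and $p_0=p_{0,0}$ is correct. The second, direct route via \R{p_to_phi} is a valid and slightly more self-contained alternative, but it is not needed; the proposal is correct and matches the paper's approach.
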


\begin{proof}
  By \R{sHermDervs},  substituting  the explicit form of $p_n$ in \R{ExpRep0}.
\end{proof}

\subsection{An example}

The next section is concerned with the substantive example of a system $\Phi$ with a skew-Hermitian differentiation matrix that originates in the Fourier setting once we use a Laguerre measure. What, though, about other examples? Once we turn our head to generating explicit examples of orthonormal systems in the spirit of this paper and of \cite{iserles18osssd}, we are faced with a problem: all steps in subsections 2.1--3 must be generated explicitly. Thus, we must choose an absolutely continuous measure for which the recurrence coefficients in \R{TTRR} are known explicitly, compute explicitly $\{p_n\}_{n\in\BB{Z}_+}$ and either 
\begin{itemize}
\item compute explicitly $\varphi_0(x)=(2\pi)^{-1/2}p_0 \int_{-\infty}^\infty |w(\xi)|^{1/2}\ee^{\ii x\xi}\D\xi$ and its derivatives, subsequently forming \R{ExpRep} and manipulating it further into a user-friendly form,
\end{itemize}
or
\begin{itemize}
\item compute explicitly \R{p_to_phi} for all $n\in\BB{Z}_+$.
\end{itemize}
Either course of action is restricted by the limitations on our knowledge of {\em explicit\/} fomul\ae{} of orthogonal polynomials for absolutely continuous measures (thereby excluding, for example, Charlier and Lommel polynomials, as well as the Askey--Wilson hierarchy). Thus Hermite polynomials and their generalisations \cite{iserles18osssd}, Jacobi and Konoplev polynomials  \cite{iserles18osssd}, Carlitz polynomials \cite{iserles19fao} and, in the next section, Laguerre polynomials. 

Herewith we present another example which, albeit of no apparent practical use, by its very simplicity helps to illustrate our narrative. Let $\alpha\in\BB{R}$ and consider $\D\mu(\xi)=\ee^{-(\xi-\alpha)^2}\D\xi$, a shifted Hermite measure. The underlying orthonormal set consists of
\begin{displaymath}
  p_n(x)=\frac{1}{\sqrt{2^nn!\sqrt{\pi}}} \CC{H}_n(x-\alpha),\qquad n\in\BB{Z}_+,
\end{displaymath}
therefore
\begin{displaymath}
  \xi p_n(\xi)=\sqrt{\frac{n}{2}}p_{n-1}(\xi)+\alpha p_n(\xi)+\sqrt{\frac{n+1}{2}} p_{n+1}(\xi)
\end{displaymath}
-- we deduce that $b_n=\sqrt{(n+1)/2}$ and $c_n\equiv\alpha$ in \R{eq:sHerm}. Moreover,
\begin{displaymath}
  \varphi_n(x)=\frac{\ii^n}{\sqrt{2\pi}} \frac{1}{\sqrt{2^nn!\sqrt{\pi}}} \int_{-\infty}^\infty \CC{H}_n(\xi-\alpha)\ee^{-(\xi-\alpha)^2/2-\ii x\xi}\D\xi =\frac{\ee^{-x^2/2-\ii\alpha x}}{\sqrt{2^nn!\sqrt{\pi}}} \CC{H}_n(x),
\end{displaymath}
`twisted' Hermite functions. It is trivial to confirm that they satisfy \R{eq:sHerm} or  derive them directly from \R{ExpRep}.

\subsection{Connections to chromatic expansions}

Theorem 1 characterises all systems $\Phi$ in $\CC{L}_2(\BB{R})$ satisfying equation \eqref{eq:sHerm}. These systems depend on a family of orthonormal polynomials on the real line with associated measure $\D\mu$ and a function $g$ on the real line. Theorems 2 and 3 focussed on the special case in which $\D\mu(\xi) = |g(\xi)|^2 \D\xi$. This special case yields all systems which are orthonormal in the inner standard product, which turn out to be complete in a Paley-Wiener space.

An anonymous referee has made the authors aware of a considerable amount of work devoted to the special case in which $\D\mu(\xi) = g(\xi) \D\xi$, whose systems generate so-called \emph{chromatic expansions} \cite{ignjatovic2007local,zayed2009generalizations,zayed2011chromatic,zayed2014chromatic}. These systems have some remarkable properties for application to signal processing which we summarise here whilst making connections to the present work.

Given a sequence of orthonormal polynomials $\{P_n\}_{n\in\BB{Z}_+}$ with respect to a finite Borel measure $\D\mu$ on the real line, define the function
\begin{equation}
\psi(x) := \int_{-\infty}^\infty \ee^{\ii x \xi} \,\D\mu(\xi),
\end{equation}
and the operators
\begin{equation}
\mathcal{K}_n = P_n\left( \ii \frac{\D}{\D x}\right),
\end{equation}
for all $n\in\BB{Z}_+$ acting on $C^\infty(\BB{R})$. The chromatic expansion of a smooth function $f$ is the formal series
\begin{equation}
f(x) = \sum_{n=0}^\infty \mathcal{K}_n[f](0) \mathcal{K}_n[\psi](x),
\end{equation}
which converges uniformly on the real line if, for example, $\mu$ is such that $\psi$ is analytic in a strip around the real axis, $f$ is analytic in this strip too, and the sequence of coefficients $\{\mathcal{K}_n[f](0)\}_{n\in\BB{Z}_+}$ is in $\ell_2$ \cite{zayed2014chromatic}.

The connection to the present work is as follows. For all $n\in\BB{Z}_+$, let $\varphi_n = \mathcal{K}_n[\psi]$ be the elements of the system $\Phi$. Then $\Phi$ is of the form described in Theorem 1 with $g(\xi) \D\xi = \D\mu(\xi)$.

This advantages for signal processing are twofold:
\begin{itemize}
  \item The expansion coefficients are given explicitly and depend locally on the function $f$ centred around the point $0$. The expansions can be made local to points other than $0$ as in \cite{ignjatovic2007local}.
  \item The functions $\Phi$ are bandlimited, with Fourier transforms supported precisely on the support of $\mu$.
  \end{itemize}

While the basis $\Phi$ with $\varphi_n = \mathcal{K}_n[\psi]$ is not orthonormal in the standard inner product on $\CC{L}_2(\BB{R})$, under some mild assumptions on $\mu$, it is possible to show that $\Phi$ is orthonormal with respect to the inner product
\begin{equation}
\langle f,g\rangle_{\mathcal{K}} = \sum_{n=0}^\infty \mathcal{K}_n[f](0)\overline{\mathcal{K}_n[g](0)},
\end{equation}
and is complete in a space of analytic functions on the real line for which the induced norm is finite \cite{ignjatovic2007local}.

\setcounter{equation}{0}
\section{The Fourier--Laguerre basis}

\subsection{A general expression}

A skew-Hermite setting allows an important generalisation of the narrative of \cite{iserles18osssd}, namely to Borel measures in the Fourier space which are not symmetric. The most obvious instance is the {\em Laguerre measure\/} $\D\mu(\xi)=\chi_{(0,\infty)}(\xi) \xi^\alpha \ee^{-\xi}\D \xi$, where $\alpha>-1$. The corresponding orthogonal polynomials are the {\em (generalised) Laguerre polynomials\/}
\begin{equation}
  \label{genLaguerre}
  \LL_n^{(\alpha)}(\xi)=\frac{(1+\alpha)_n}{n!} \hyper{1}{1}{-n}{1+\alpha}{\xi}=\frac{(1+\alpha)_n}{n!} \sum_{\ell=0}^n (-1)^\ell {n\choose \ell} \frac{\xi^\ell}{(1+\alpha)_\ell},
\end{equation}
where $(z)_m=z(z+1)\cdots(z+m-1)$ is the {\em Pochhammer symbol\/} and ${}_1F_1$ is a {\em confluent hypergeometric function\/} \cite[p.~200]{rainville60sf}. The Laguerre polynomials obey the recurrence relation
\begin{displaymath}
  (n+1)\LL_{n+1}^{(\alpha)}(\xi)=(2n+1+\alpha-\xi)\LL_n^{(\alpha)}(\xi)-(n+\alpha)\LL_{n-1}^{(\alpha)}(\xi).
\end{displaymath}
First, however, we need to recast them in a form suitable to the analysis of Section~2 -- specifically, we need to renormalise them so that they are orthonormal and so that the coefficient of $\xi^n$ in $p_n$ is positive. Since
\begin{displaymath}
  \|\LL_n^{(\alpha)}\|^2=\int_0^\infty \xi^\alpha [\LL_n^{(\alpha)}(\xi)]^2\ee^{-\xi}\D\xi=\frac{\Gamma(n+1+\alpha)}{n!}
\end{displaymath}
\cite[p.~206]{rainville60sf} and the sign of $\xi^n$ in \R{genLaguerre} is $(-1)^n$, we set
\begin{displaymath}
  p_n(\xi)=(-1)^n \sqrt{\frac{n!}{\Gamma(n+1+\alpha)}} \LL_n^{(\alpha)}(\xi),\qquad n\in\BB{Z}_+. 
\end{displaymath}
We deduce after simple algebra that
\begin{displaymath}
  b_n=\beta_n=\sqrt{(n+1)(n+1+\alpha)},\qquad c_n=\delta_n=2n+1+\alpha
\end{displaymath}
in \R{eq:sHerm} and \R{TTRR}. ($b_n=\beta_n$ because the latter is real and positive.) 

To compute $\Phi$ we note that, letting $\tau=(\frac12-\ii x)\xi$, \R{p_to_phi} yields
\begin{eqnarray*}
  \varphi_0(x)&=&\frac{1}{\sqrt{2\pi\Gamma(1+\alpha)}} \int_0^\infty \xi^{\alpha/2}\ee^{-\xi/2+\ii\xi x}\D \xi\\
  &=&\frac{1}{\sqrt{2\pi(1+\alpha)}} \frac{2^{\alpha/2+1}}{(1-2\ii x)^{\alpha/2+1}} \int_0^\infty\! \tau^{\alpha/2}\ee^{-\tau}\D\tau \\
  &=&\frac{1}{\sqrt{2\pi}} \frac{\Gamma(1+\frac12\alpha)}{\sqrt{\Gamma(1+\alpha)}}\left(\frac{2}{1-2\ii x}\right)^{\!1+\alpha/2}.
\end{eqnarray*}
It now follows by simple induction that\footnote{Note that the bracketed superscripts $^{(\alpha)}$ and $^{(\ell)}$ have different meanings. The former is the standard notation for the parameter in the generalized Laguerre polynomial and the latter is the standard notation for the $\ell$th derivative.}
\begin{displaymath}
  \varphi_0^{(\ell)}(x)=\frac{\ii^\ell}{\sqrt{2\pi}} \frac{\Gamma(\ell+1+\frac12\alpha)}{\sqrt{\Gamma(1+\alpha)}}\left(\frac{2}{1-2\ii x}\right)^{\!\ell+1+\alpha/2},\qquad \ell\in\BB{Z}_+. 
\end{displaymath}
Moreover,
\begin{eqnarray*}
  p_n(\xi)&=&(-1)^n \sqrt{\frac{n!}{\Gamma(n+1+\alpha)}} (1+\alpha)_n \sum_{\ell=0}^n \frac{(-1)^\ell \xi^\ell}{\ell!(n-\ell)!(1+\alpha)_\ell}\\
  &=&\frac{\sqrt{n!\Gamma(n+1+\alpha)}}{\Gamma(1+\alpha)} \sum_{\ell=0}^n \frac{(-1)^{n-\ell}\xi^\ell}{\ell!(n-\ell)!(1+\alpha)_\ell},
\end{eqnarray*}
therefore
\begin{displaymath}
  p_{n,\ell}=\frac{\sqrt{n!\Gamma(n+1+\alpha)}}{\Gamma(1+\alpha)}  \frac{(-1)^{n-\ell}}{\ell!(n-\ell)!(1+\alpha)_\ell},\qquad \ell=0,\ldots,n
\end{displaymath}
and substitution in \R{ExpRep} gives
\begin{eqnarray*}
  \varphi_n(x)&=&\frac{(-\ii)^n}{\sqrt{2\pi}} \frac{\sqrt{n!\Gamma(n+1+\alpha)}}{\Gamma(1+\alpha)}\! \left(\frac{2}{1-2\ii x}\right)^{\!1+\frac{\alpha}{2}} \!\sum_{\ell=0}^n  \frac{\Gamma(\ell+1+\frac12\alpha)}{\ell!(n-\ell)!(1+\alpha)_\ell} \!\left(\frac{2}{1-2\ii x}\right)^{\!\ell}\\
  &=&\frac{(-\ii)^n}{\sqrt{2\pi}} \sqrt{\frac{\Gamma(n+1+\alpha)}{n!}} \frac{\Gamma(1+\frac{\alpha}{2})}{\Gamma(1+\alpha)} \left(\frac{2}{1-2\ii x}\right)^{\!1+\frac{\alpha}{2}} \hyper{2}{1}{-n,1+\frac12\alpha}{1+\alpha}{\frac{2}{1-2\ii x}}\!.
\end{eqnarray*}
The identity,
\begin{displaymath}
  \hyper{2}{1}{-n,b}{c}{z}=\frac{(c-b)_n}{(c)_n}\hyper{2}{1}{-n,b}{b-c-n+1}{1-z},
\end{displaymath}
\cite[15.8.7]{DLMF}, implies that we have
\begin{displaymath}
  \varphi_n(x)= \frac{(-\ii)^n}{\sqrt{2\pi}}  \frac{\alpha 2^{\frac{\alpha}{2}}\Gamma(n+\frac{\alpha}{2})}{\sqrt{n!\Gamma(n+1+\alpha)}}\left(\frac{1}{1-2\ii x}\right)^{\!1+\frac{\alpha}{2}} \hyper{2}{1}{-n,1+\frac12\alpha}{1-\frac12\alpha-n}{-\frac{1+2\ii x}{1-2\ii x}}\!.
\end{displaymath}
It is now clear that $\varphi_n$ is proportional to $(1-2\ii x)^{-1-\alpha/2}$ times a polynomial of degree $n$ in the expression $(1+2\ii x)/(1-2\ii x)$ i.e.
\begin{equation}
\label{FL}
\varphi_n(x) = (-\ii)^n\sqrt{\frac{2}{\pi}} \left(\frac{1}{1-2\ii x}\right)^{\!1+\frac{\alpha}{2}} \Pi^{(\alpha)}_n\left(\frac{1+2\ii x}{1-2\ii x}\right), 
\end{equation}
where $\Pi^{(\alpha)}_n$ is a polynomial of degree $n$. Using the substitution $x = \frac12 \tan \frac{\theta}{2}$ for $\theta \in (-\pi,\pi)$, which implies $(1+2\ii x)/(1-2\ii x) = \ee^{\ii \theta}$, the orthonormality of the basis $\Phi$ can be seen to imply that $\{\Pi^{(\alpha)}_n\}_{n\in\BB{Z}_+}$ are in fact orthogonal polynomials on the unit circle (OPUC) with respect to the weight
\begin{displaymath}
  W(\theta) = \cos^{\alpha} \frac{\theta}{2}.
\end{displaymath}
To be clear, this means that for all $n,m\in\BB{Z}_+$,
\begin{displaymath}
  \frac{1}{2\pi}\int_{-\pi}^\pi \overline{\Pi^{(\alpha)}_n(\ee^{\ii \theta})}\Pi^{(\alpha)}_m(\ee^{\ii \theta}) \,\cos^{\alpha} \frac{\theta}{2}\,\mathrm{d} \theta = \delta_{n,m}.
\end{displaymath}
These polynomials are related to the \emph{Szeg\H{o}--Askey polynomials} \cite[18.33.13]{DLMF}, $\{\phi^{(\lambda)}_n\}_{n\in\BB{Z}_+}$, which satisfy
\begin{displaymath}
  \frac{1}{2\pi}\int_{-\pi}^\pi \overline{\phi^{(\lambda)}_n(\ee^{\ii \theta})}\phi^{(\lambda)}_m(\ee^{\ii \theta}) \,(1-\cos \theta)^\lambda\,\mathrm{d} \theta = \delta_{n,m},
\end{displaymath}
by the relation $\Pi^{(\alpha)}_n(z) \propto \phi^{(\frac{\alpha}{2})}_n(-z)$. The Szeg\H{o}--Askey polynomials are known to satisfy a Delsarte--Genin relationship to the Jacobi polynomials $\CC{P}_n^{\left(\frac{\alpha-1}{2},-\frac12 \right)}$ and $\CC{P}_n^{\left(\frac{\alpha+1}{2},\frac12 \right)}$ due to the symmetry of the weight of orthogonality \cite[p.~295]{szego75op}, \cite[18.33.14]{DLMF}. Specifically,
\begin{eqnarray*}
  \ee^{-n\ii\theta}\Pi^{(\alpha)}_{2n}(\ee^{\ii\theta}) &=& A_n \CC{P}_n^{\left(\frac{\alpha-1}{2},-\frac12 \right)}\left(\cos \theta\right) + \ii B_n \sin \theta  \CC{P}_{n-1}^{\left(\frac{\alpha+1}{2},\frac12 \right)}\left(\cos \theta\right), \\
  \ee^{(1-n)\ii\theta} \Pi^{(\alpha)}_{2n-1}(\ee^{\ii \theta}) &=& C_n \CC{P}_n^{\left(\frac{\alpha-1}{2},-\frac12 \right)}\left(\cos \theta\right) + \ii D_n\sin \theta \CC{P}_{n-1}^{\left(\frac{\alpha+1}{2},\frac12 \right)}\left(\cos\theta\right),
\end{eqnarray*}
for some real constants $\{A_n,B_n,C_n,D_n\}_{n\in\BB{Z}_+}$. It is therefore possible to express the functions $\Phi$ in terms of Jacobi polynomials; this is something we will not pursue here, but could be of interest for further research. In what follows we will restrict ourselves to the case $\alpha = 0$, which is extremely simple.

We are not aware if this connection between the general Laguerre polynomials and  Szeg\H{o}--Askey polynomials (and hence  Jacobi polynomials) via the Fourier transform has been acknowledged before in the literature.

\subsection{The Malmquist--Takenaka system}

The expression \R{FL} comes into its own once we let $\alpha=0$, namely consider the `simple' Laguerre polynomials $\LL_n$. Now $W(\theta) \equiv 1$ and so $\Pi^{(\alpha)}_n(z) = z^n$. We have $b_n=n+1$, $c_n=2n+1$ and
\begin{displaymath}
  \varphi_n(x)=\sqrt{\frac{2}{\pi}} \ii^n \frac{(1+2\ii x)^n}{(1-2\ii x)^{n+1}},\qquad n\in\BB{Z}_+.
\end{displaymath}
The factor of $(-1)^n$ which might appear to have been added here comes from the identity $\hyper{2}{1}{-n,1}{1}{z} = (1-z)^n$ with $z = 2/(1-2\ii x)$. Alternatively, we may apply a formula for the Laplace transform of Laguerre polynomials at an appropriate point in the complex plane \cite[18.17.34]{DLMF}, to obtain
\begin{displaymath}
  \int_0^\infty \LL_n(\xi) \ee^{-\frac{\xi}{2}+\ii x\xi}\D\xi =2(-1)^n\frac{(1+2\ii x)^n}{(1-2\ii x)^{n+1}},\qquad n\in\BB{Z}_+. 
\end{displaymath}

By Theorem \ref{thm:PW}, these functions are dense in the Paley--Wiener space $\mathcal{PW}_{[0,\infty)}(\BB{R})$. To obtain a basis for the whole of $\mathrm{L}_2(\BB{R})$, we must add to this a basis for $\mathcal{PW}_{(-\infty,0]}(\BB{R})$. The obvious way to do so is to consider the same functions as above, but for the orthogonal polynomials with respect to $\chi_{(-\infty,0]}(\xi) \ee^\xi\D\xi$, which are precisely $\mathrm{L}_n(-\xi)$, $n\in\BB{Z}_+$. Using the Laplace transform again, this leads to the functions
\begin{eqnarray*}
  \tilde{\varphi}_n(x) &=& \frac{(-\ii)^n}{\sqrt{2\pi}}\int_{-\infty}^0 \ee^{\ii x\xi}\,\mathrm{L}_n(-\xi) \,\ee^{\frac{\xi}{2}} \, \D \xi =\frac{(-\ii)^n}{\sqrt{2\pi}}\int_0^\infty \ee^{-\ii x\xi}\,\mathrm{L}_n(\xi) \,\ee^{-\frac{\xi}{2}} \, \D \xi \\
  &=& \ii^n \sqrt{\frac{2}{\pi}} \frac{(1-2\ii x)^{\!n}}{(1+2\ii x)^{ n+1}}.
\end{eqnarray*}
Letting $\varphi_n=\tilde{\varphi}_{-n-1}$, $n\leq-1$, we obtain the {\em Malmquist--Takenaka system\/} \R{MT}. 

As a matter of historical record, \citeasnoun{malmquist26stc} and \citeasnoun{takenaka25oof} considered a more general system of the form
\begin{displaymath}
  \mathcal{B}_n(z)=\frac{\sqrt{1-|\theta_n|^2}}{1-\overline{\theta}_n z} \psi_n(z),\quad \mathcal{B}_{-n}(z)=\overline{\mathcal{B}_n(1/\overline{z})},\qquad n\in\BB{Z}_+,
\end{displaymath}
where $\psi_n(z)=\prod_{k=0}^{n-1}(z-\theta_k)/(1-\overline{\theta}_k z)$ is a finite Blaschke product and $|\theta_k|<1$, $k\in\BB{Z}_+$. The nature of the questions they have asked was different -- essentially, they proved that the above system is a basis (which need not be orthogonal) of $\mathcal{H}_2$, the Hardy space of complex analytic functions in the open unit disc. In our case the $\theta_k\equiv2\ii$ are all the same and outside the unit circle, yet it seems fair (and consistent with, say, \cite{pap15ecm}) to call \R{MT} a Malmquist--Takenaka system. 

\begin{figure}[! htb]
  \begin{center}
    \begin{minipage}{\textwidth} 
      \begin{small}
      \begin{overpic}[width=\textwidth]{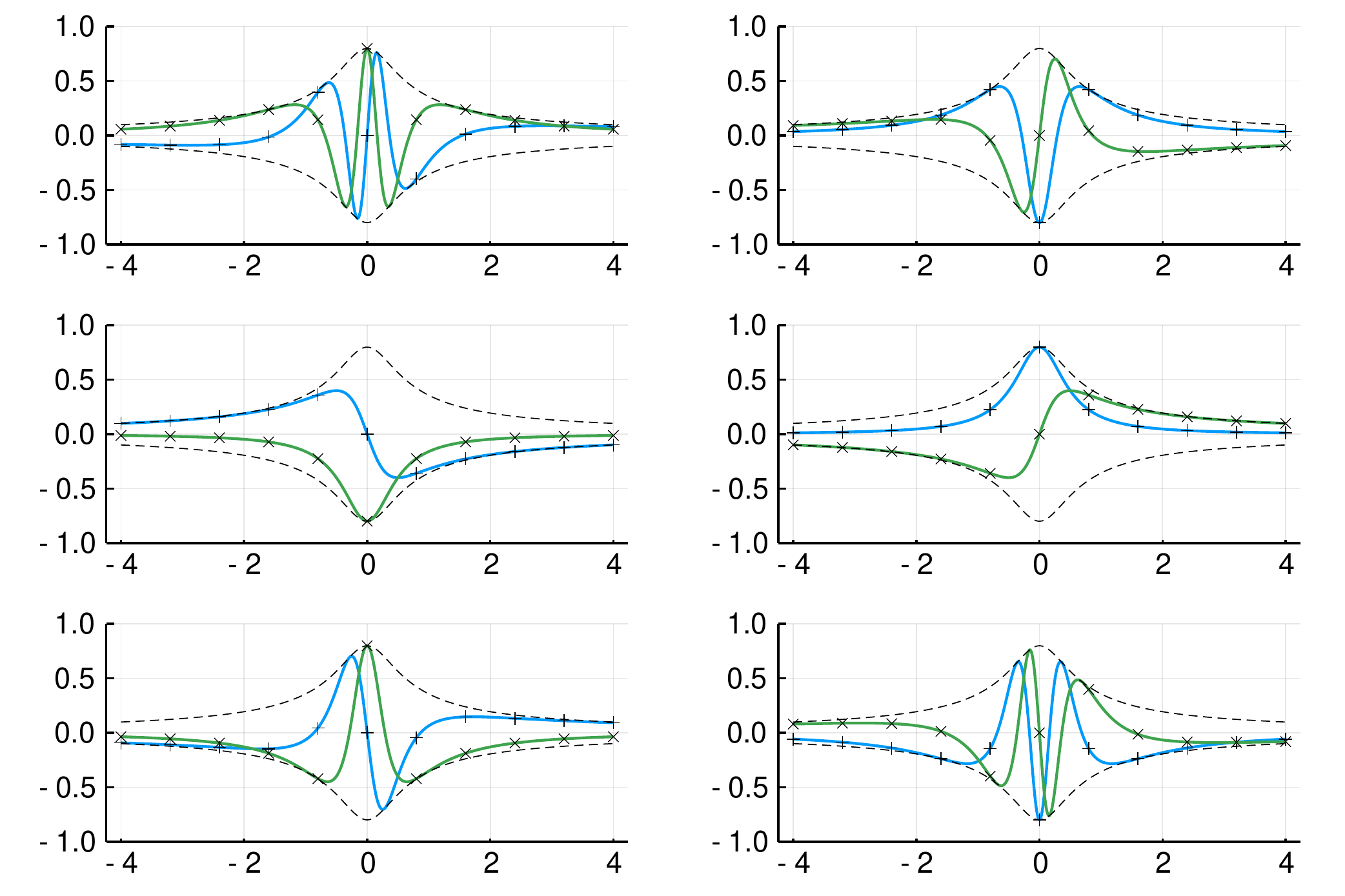}
        \put(24,66) {$n=-3$}
        \put(74,66) {$n=-2$}
        \put(24,43) {$n=-1$} 
        \put(75,43) {$n=0$} 
        \put(25,21) {$n=1$} 
        \put(75,21) {$n=2$} 
      \end{overpic} 
    \end{small}
    \end{minipage} 
    \caption{Real and imaginary parts (blue lines with black `$+$'s and green lines with black `$\times$'s, respectively) of the MT functions $\varphi_n$ for $n=-3,\ldots,2$. The envelope of $\pm\sqrt{\frac{2}{\pi(1+4x^2)}}$ is also plotted as a dashed line.}
    \label{fig:3.1}
  \end{center}
\end{figure}

Fig.~\ref{fig:3.1} displays the real and imaginary parts of few Malmquist--Takenaka functions.

Let us dwell briefly on the properties of \R{MT}. 
\begin{itemize}
\item The system is dense in $\CC{L}_2(\BB{R})$, because standard Laguerre polynomials are dense in $\CC{L}_2((0,\infty),\ee^{-\xi}\D\xi)$ and $\{\LL_n(-\xi)\}_{n\in\BB{Z}_+}$ is dense in $\CC{L}_2((-\infty,0),\ee^{\xi}\D\xi)$. 
\item All the functions $\varphi_n$ are uniformly bounded,
\begin{displaymath}
  |\varphi_n(x)|= \sqrt{\frac{2}{\pi}} \frac{1}{\sqrt{1+4x^2}},\qquad x\in\BB{R}.
\end{displaymath}
\item The differentiation matrix,
\begin{equation}
  \label{matrixD}
  \mathcal{D}=
  \left[
  \begin{array}{cccccccc}
        \ddots & \ddots\\
        \ddots & -5\ii & -2\\
        & 2 & -3\ii & -1\\
        & & 1 & -\ii & 0\\
        & & & 0 & \ii & 1\\
        & & & & -1 & 3\ii & 2\\
        & & & & & -2 & 5\ii & \ddots\\
        & & & & & & \ddots & \ddots 
  \end{array}
  \right]\!,
\end{equation}
is skew-Hermitian, tridiagonal and {\em reducible\/} -- specifically, $\mathcal{D}_{-1,1}=\mathcal{D}_{1,-1}=0$ and the matrix decomposes into two irreducible `chunks', corresponding to $n\leq-1$ and $n\geq0$. 

While \R{matrixD} follows from our construction, it can be also proved directly from \R{MT}:
\begin{eqnarray*}
  \varphi_n'(x)&=&\ii^n \sqrt{\frac{2}{\pi}} \left[ 2\ii n \frac{(1+2\ii x)^{n-1}}{(1-2\ii x)^{n+1}} +2\ii(n+1) \frac{(1+2\ii x)^n}{(1-2\ii x)^{n+2}}\right]\\
  &=&\ii^{n+1} \sqrt{\frac{2}{\pi}} \frac{(1+2\ii x)^{n-1}}{(1-2\ii x)^{n+2}} [2n(1-2\ii x)+2(n+1)(1+2\ii x)]\\
  &=&\ii^{n+1} \sqrt{\frac{2}{\pi}} \frac{(1+2\ii x)^{n-1}}{(1-2\ii x)^{n+2}} (4n+2+4\ii x),
\end{eqnarray*}
while
\begin{eqnarray*}
  &&-n\varphi_{n-1}(x)+(2n+1)\ii \varphi_n(x)+(n+1)\varphi_{n+1}(x)\\
  &=&\ii^{n+1} \sqrt{\frac{2}{\pi}} \frac{(1+2\ii x)^{n-1}}{(1-2\ii x)^{n+2}} [n(1-2\ii x)^2+(2n+1) (1+4x^2)+(n+1)(1+2\ii x)^2]\\
  &=&\ii^{n+1} \sqrt{\frac{2}{\pi}} \frac{(1+2\ii x)^{n-1}}{(1-2\ii x)^{n+2}} (4n+2+4\ii x).
\end{eqnarray*}
\item The MT system obeys a host of identities that make it amenable for implementation in spectral methods. The following were identified by Christov,
\begin{eqnarray}
  \label{product}
  \varphi_m(x)\varphi_n(x)&=&\frac{1}{\sqrt{2\pi}}[\varphi_{m+n}(x)-\ii \varphi_{m+n+1}(x)],\qquad m,n\in\BB{Z}_+,\\
  \nonumber
  x\varphi_n'(x)&=&-\frac{n}{2}\ii \varphi_{n-1}(x)-\frac12\varphi_n(x)-\frac{n+1}{2}\varphi_{n+1}(x),\qquad n\in\BB{Z}
\end{eqnarray}
\cite{christov1982complete} and the following is apparently new,
\begin{displaymath}
  \frac{4\ii}{1+4x^2} \varphi_n(x)=-\varphi_{n-1}(x)+2\varphi_n(x)+\varphi_{n+1}(x),\qquad n\in\BB{Z}.
\end{displaymath}
In particular, \R{product} implies that
\begin{eqnarray*}
  &&\sum_{m=-\infty}^\infty \hat{f}_m\varphi_m(x) \sum_{n=-\infty}^\infty \hat{h}_n\varphi_n(x) \\
  &=&\frac{1}{\sqrt{2\pi}} \sum_{n=-\infty}^\infty \left[\sum_{m=-\infty}^\infty \hat{f}_m(\hat{h}_{n-m}-\ii \hat{h}_{n-m-1})\right]\! \varphi_n(x),
\end{eqnarray*}
allowing for an easy multiplication of expansions in the MT basis.
\end{itemize}

\subsection{Expansion coefficients}

Arguably the most remarkable feature of the MT system is that expansion coefficients can be computed very rapidly indeed. Thus, let $f\in\CC{L}_2(\BB{R})$. Then
\begin{displaymath}
  f(x)=\sum_{n=-\infty}^\infty \hat{f}_n \varphi_n(x)\qquad \mbox{where}\qquad \hat{f}_n=\int_{-\infty}^\infty f(x) \overline{\varphi_n(x)}\D x,\quad n\in\BB{Z}.
\end{displaymath}
We do not dwell here on speed of convergence except for brief comments in subsection~3.4 -- this is a nontrivial issue and, while general answer is not available, there is wealth of relevant material in \cite{weideman95tao}. Our concern is with efficient algorithms for the evaluation of $\hat{f}_n$ for $-N\leq n\leq N-1$. 

The key observation is that 
\begin{displaymath}
  \varphi_n(x)=\ii^n \sqrt{\frac{2}{\pi}} \frac{1}{1-2\ii x} \left(\frac{1+2\ii x}{1-2\ii x}\right)^{\!n}
\end{displaymath}
and the term on the right is of unit modulus. We thus change variables
\begin{equation}
  \label{eqn:unitcircle}
  \frac{1+2\ii x}{1-2\ii x}=\ee^{\ii\theta},\qquad -\pi<\theta<\pi,
\end{equation}
in other words $x=\frac12 \tan\frac{\theta}{2}$ and, in the new variable
\begin{displaymath}
  \varphi_n(x)=\ii^n \sqrt{\frac{2}{\pi}} \ee^{\ii(n+\frac12)\theta} \cos\frac{\theta}{2},\qquad n\in\BB{Z}. 
\end{displaymath}
We deduce that
\begin{equation}
  \label{expand}
  \hat{f}_n =\frac{(-\ii)^n}{2\sqrt{2\pi}} \int_{-\pi}^\pi \left(1-\ii\tan\frac{\theta}{2}\right) f\!\left(\frac12 \tan\frac{\theta}{2}\right) \ee^{-\ii n\theta}\D\theta,\qquad n\in\BB{Z},
\end{equation}
a Fourier integral. Two immediate consequences follow. Firstly, the convergence of the coefficients as $|n|\rightarrow\infty$ is dictated by the smoothness of the modified function
\begin{displaymath}
  F(\theta) = \left(1-\ii \tan\frac{\theta}{2} \right)f\left(\frac12 \tan \frac{\theta}{2} \right)\!,\qquad -\pi<\theta<\pi.
\end{displaymath}
Secondly, provided $F$ is analytic, \R{expand} can be approximated to exponential accuracy by a {\em Discrete Fourier Transform\/}\footnote{The approximation remains valid -- but less accurate -- for $F\in\CC{C}^k(-\pi,\pi)$.} and this, in turn, can be computed rapidly with {\em Fast Fourier Transform (FFT)}: the first $N$ coefficients require $\O{N\log_2N}$ operations. 

\begin{proposition}[Fast approximation of coefficients]
  The truncated MT system $\{\varphi_n\}_{n=-N}^{N-1}$ is orthonormal with respect to the  discrete inner product,
  \begin{displaymath}
    \langle f, g \rangle_{N} = \frac{\pi}{4N} \sum_{k=1}^{2N} f\left(x_k\right) \overline{g\left(x_k\right)}(1+4x_k^2), 
  \end{displaymath}
  where
  \begin{displaymath}
    x_k = \frac{1}{2}\tan\frac{\theta_k}{2}, \qquad k = 1,2,\ldots,2N,
  \end{displaymath} 
  and $\theta_1,\ldots,\theta_{2N}$ are equispaced points in the periodic interval $[-\pi,\pi]$ (such that $\theta_k - \theta_{k-1} = \pi/N$). The coefficients of a function $f$ in the span of $\{\varphi_n\}_{n=-N}^{N-1}$ are exactly equal to
  \begin{equation}
  	\label{eqn:discretecoeffs}
    \langle f, \varphi_n \rangle = \langle f, \varphi_n \rangle_N = (-\ii)^n \sqrt{\frac{\pi}{2}} \frac{1}{2N}\sum_{k=1}^{2N} f(x_k)(1-2\ii x_k) \ee^{-\ii n \theta_k},
  \end{equation}
  and can be computed simultaneously in $\O{N\log_2N}$ operations using the FFT.
\end{proposition}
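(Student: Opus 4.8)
The plan is to show three things in turn: (i) that the quadrature rule underlying $\langle\cdot,\cdot\rangle_N$ is exact on the relevant space of functions, (ii) that this exactness forces the discrete inner product to agree with the continuous one on the span of $\{\varphi_n\}_{n=-N}^{N-1}$ and hence also with the truncated-coefficient formula, and (iii) that the resulting sum is a discrete Fourier transform and so is computable in $\O{N\log_2 N}$ operations. The change of variables $x=\tfrac12\tan\tfrac{\theta}{2}$ from \R{eqn:unitcircle} is the workhorse throughout: under it, as already computed in the text, $\varphi_n(x)=\ii^n\sqrt{2/\pi}\,\ee^{\ii(n+\frac12)\theta}\cos\tfrac{\theta}{2}$, and $\D x = \tfrac14\sec^2\tfrac{\theta}{2}\,\D\theta = \tfrac14(1+4x^2)\,\D\theta$, which is exactly the Jacobian that produces the weight $(1+4x_k^2)$ and the factor $\pi/(4N)$ in $\langle\cdot,\cdot\rangle_N$ once the $\theta$-integral is replaced by the $2N$-point equispaced (rectangle/trapezoidal) rule.

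The core step is the following. For $f$ in the span of $\{\varphi_n\}_{n=-N}^{N-1}$, write $f(x)=\sqrt{2/\pi}\,(1-2\ii x)^{-1}\sum_{m=-N}^{N-1}a_m\ee^{\ii m\theta}$ in the $\theta$ variable, using $1-2\ii x = 2/(1+\ee^{\ii\theta})$ so that $(1-2\ii x)^{-1}=\tfrac12(1+\ee^{\ii\theta})$; the point is that $(1-2\ii x)f(x)$ is a trigonometric polynomial in $\ee^{\ii\theta}$ of degrees between $-N$ and $N$. Then $\langle f,\varphi_n\rangle = \int_{-\infty}^\infty f\overline{\varphi_n}\,\D x$ becomes, after the substitution, $\tfrac{1}{2\sqrt{2\pi}}(-\ii)^n\int_{-\pi}^\pi (1-2\ii x)f(x)\,\ee^{-\ii n\theta}\,\D\theta$ — this is just \R{expand} rewritten — and the integrand $(1-2\ii x)f(x)\ee^{-\ii n\theta}$ is a trigonometric polynomial of degree at most $N+N-1<2N$ in absolute value. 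The $2N$-point equispaced rule integrates $\ee^{\ii k\theta}$ exactly for $|k|<2N$ (and also for $k=2N$ by periodicity, since $\ee^{2\ii N\theta_k}=\ee^{-2\pi\ii k}=1$ is constant), so the continuous and discrete integrals coincide. Discretising $\int_{-\pi}^\pi(\cdots)\D\theta \approx \tfrac{\pi}{N}\sum_{k=1}^{2N}(\cdots)$ and substituting $\D\theta = \tfrac{\D x}{(1+4x^2)/4}$ back to recover the $x$-form gives simultaneously the claimed formula \R{eqn:discretecoeffs} and the identity $\langle f,\varphi_n\rangle = \langle f,\varphi_n\rangle_N$; taking $f=\varphi_m$ in particular yields discrete orthonormality of the truncated system (the continuous values $\langle\varphi_m,\varphi_n\rangle=\delta_{m,n}$ being known from Theorem \ref{thm:orthogonalPhi} and the MT construction). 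Finally, \R{eqn:discretecoeffs} is literally $(-\ii)^n\sqrt{\pi/2}$ times the $n$-th entry of the length-$2N$ DFT of the data $\{f(x_k)(1-2\ii x_k)\}_{k=1}^{2N}$ (up to the trivial phase shift coming from $\theta_k$ not starting at $0$), so all $2N$ of them are obtained by one FFT in $\O{N\log_2 N}$ operations.

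The main obstacle — really the only non-bookkeeping point — is getting the degree count in the quadrature-exactness argument exactly right: one must check that for $f$ in the span of $\{\varphi_n\}_{n=-N}^{N-1}$ and $n$ in the same range, the trigonometric polynomial $(1-2\ii x)f(x)\ee^{-\ii n\theta}$ has all its frequencies strictly inside $(-2N,2N)$ except possibly for an endpoint frequency $\pm 2N$ that the equispaced rule still integrates correctly by aliasing-to-constant. The asymmetric index range $-N,\dots,N-1$ (rather than a symmetric $-N,\dots,N$) is precisely what makes this work, so the proof should flag that the $(1-2\ii x)^{-1}=\tfrac12(1+\ee^{\ii\theta})$ factor contributes frequencies $0$ and $+1$, pushing the top frequency to exactly $N-1+1 = N$ and the bottom to $-N$, both within range; the $n$-dependence shifts everything by $-n\in\{-(N-1),\dots,N\}$, and a short case check confirms no frequency of modulus $>2N$ ever appears and that the sole modulus-$2N$ case is handled by periodicity. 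Everything else — the explicit Jacobian, the constants $\pi/(4N)$ and $\pi/N$, the phase $(-\ii)^n$ — is routine substitution already carried out in the surrounding text.
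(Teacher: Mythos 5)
Your proof is correct and rests on the same computation as the paper's: the discrete orthogonality of $\ee^{\ii j\theta_k}$ over $2N$ equispaced nodes for $0<|j|<2N$ (the paper sums the geometric series for $\langle\varphi_\ell,\varphi_k\rangle_N$ directly and then transfers equality of the two inner products to the span by bilinearity, whereas you package the same fact as exactness of the equispaced rule on trigonometric polynomials of degree below $2N$ and derive orthonormality as the special case $f=\varphi_m$ — a reversal of logical order, not of substance). One parenthetical claim is false, though harmless here: the $2N$-point rule does \emph{not} integrate $\ee^{\pm 2\ii N\theta}$ exactly (the quadrature returns $2\pi$ while the integral is $0$ — that is precisely aliasing, not exactness); your proof survives only because, as your own degree count shows, the frequencies $m-n$ that actually occur satisfy $|m-n|\le 2N-1$, so the modulus-$2N$ case never arises and should simply be deleted rather than "handled by periodicity."
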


\begin{proof}
  Let $k,\ell$ be integers satisfying $-N \leq k,\ell \leq N-1$. Then
  \begin{displaymath}
    \langle \varphi_\ell, \varphi_k \rangle_N = \frac{1}{2N} \sum_{j=1}^{2N} \left(\frac{1+2\ii x_j}{1-2\ii x_j} \right)^{\!\ell-k}.
  \end{displaymath}
  If $k= \ell$ then this is clearly equal to 1. Otherwise, using equation \eqref{eqn:unitcircle}, we see that,
  \begin{displaymath}
    \langle \varphi_\ell, \varphi_k \rangle_N =  \frac{1}{2N} \sum_{j=1}^{2N} \ee^{\ii(\ell-k)\theta_j}.
  \end{displaymath}
  Summing the geometric series, since $\theta_j - \theta_{j-1} = \pi/N$ we have
  \begin{displaymath}
    \langle \varphi_\ell, \varphi_k \rangle_N = \ee^{\ii(\ell-k)\theta_1} \frac{1}{2N}\frac{1-\ee^{2\pi\ii(k-\ell)}}{1-\ee^{\pi\ii(k-\ell)/N}}=0.
  \end{displaymath}
  This proves that $\{\varphi_n\}_{n=-N}^{N-1}$ forms an orthonormal basis with respect to the inner product $\langle \,\cdot\,,\, \cdot \,\rangle_N$. It follows that $\langle f, h \rangle = \langle f,h \rangle_N$ for all $f$ and $h$ in the span of $\{\varphi_n\}_{n=-N}^{N-1}$. Inserting $h(x) = \varphi_n(x)$ into the expression for the discrete inner product and then using equation \eqref{eqn:unitcircle} yields \eqref{eqn:discretecoeffs}. 
  \end{proof}

\subsection{Speed of convergence}

\begin{theorem}
  Let $f \in \mathrm{L}_2(\BB{R})$. The generalised Fourier coefficients satisfy 
  \begin{equation}
    \label{convergence}
    \langle f, \varphi_n \rangle =\O{\rho^{-|n|}},
  \end{equation}
  for some $\rho > 1$ if and only if the function $t \mapsto \left(1 - 2\ii t\right)\!f(t)$ can be analytically continued to the set
  \begin{displaymath}
    C_\rho = \overline{\BB{C}} \setminus \left(\BB{D}_{r_\rho}(a_\rho) \cup \BB{D}_{r_\rho}(\overline{a}_\rho)\right)
  \end{displaymath}
  where $\overline{\BB{C}}$ is the Riemann sphere consisting of the complex plane and the point at infinity, and $\BB{D}_r(a)$ is the disc with centre $a \in \BB{C}$ and radius $r > 0$, with
  \begin{displaymath}
    a_\rho = \frac{\ii}{2} \frac{\rho + \rho^{-1}}{\rho - \rho^{-1}},\qquad  r_\rho = \frac{1}{\rho - \rho^{-1}}.
  \end{displaymath}
\end{theorem}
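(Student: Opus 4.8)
The plan is to reduce the statement, via the change of variables $\frac{1+2\ii x}{1-2\ii x}=\ee^{\ii\theta}$ already used in Subsection~3.3, to the classical characterisation of geometric decay of Fourier coefficients of a $2\pi$-periodic function in terms of analytic continuation to a strip $|\Im\theta|<\log\rho$. From \R{expand} we have
\begin{displaymath}
  \langle f,\varphi_n\rangle = \frac{(-\ii)^n}{2\sqrt{2\pi}}\int_{-\pi}^{\pi} F(\theta)\ee^{-\ii n\theta}\,\D\theta,
  \qquad F(\theta)=\left(1-\ii\tan\tfrac{\theta}{2}\right) f\!\left(\tfrac12\tan\tfrac{\theta}{2}\right),
\end{displaymath}
so apart from the unimodular factor $(-\ii)^n$, the numbers $\langle f,\varphi_n\rangle$ are precisely the Fourier coefficients of $F$. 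Hence \R{convergence} holds for some $\rho>1$ if and only if $F$ extends analytically (and boundedly on compact subsets) to the open strip $S_\rho=\{\theta\in\BB{C}:|\Im\theta|<\log\rho\}$, and possibly with poles only on the boundary; this is the standard Bernstein-type lemma for periodic functions, which I would invoke directly. Note $F(\theta) = (1-2\ii x)\,f(x)$ under $x=\tfrac12\tan\tfrac\theta2$, so writing $h(t):=(1-2\ii t)f(t)$ the question becomes: for which $\rho$ does $h\big(\tfrac12\tan\tfrac\theta2\big)$ continue analytically to $S_\rho$?

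The core of the argument is therefore the geometry of the conformal map $\theta\mapsto x=\tfrac12\tan\tfrac\theta2$, equivalently $\theta\mapsto w=\ee^{\ii\theta}=\frac{1+2\ii x}{1-2\ii x}$, which is a Möbius transformation $w\mapsto x=\tfrac{1}{2\ii}\,\tfrac{w-1}{w+1}$. First I would determine the image of the strip $S_\rho$ under $\theta\mapsto w$: the horizontal lines $\Im\theta=\pm\log\rho$ map to the circles $|w|=\rho^{\mp1}$, so $S_\rho$ maps to the annulus $\{\rho^{-1}<|w|<\rho\}$. Then I push this annulus through the Möbius map $w\mapsto x$. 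A Möbius map sends circles to circles (or lines); the circle $|w|=\rho$ maps to some circle (or line) in the $x$-plane, likewise $|w|=\rho^{-1}$. Because $w=-1$ (the pole of the map, $x=\infty$) lies between the two circles $|w|=\rho^{-1}$ and $|w|=\rho$ when $\rho>1$, the annulus maps to the complement (on the Riemann sphere) of two disjoint discs — one coming from each bounding circle. A direct computation of the images of the two circles $|w|=\rho^{\pm1}$ under $x=\tfrac{1}{2\ii}\tfrac{w-1}{w+1}$, locating their centres and radii, should yield exactly $\BB{D}_{r_\rho}(a_\rho)$ and $\BB{D}_{r_\rho}(\overline a_\rho)$ with the stated $a_\rho,r_\rho$; by symmetry $w\mapsto \overline w$ corresponds to $\theta\mapsto-\overline\theta$ and $x\mapsto-\overline x$, which forces the two excluded discs to be complex conjugates of each other, consistent with the formula. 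Thus $F$ extends analytically to $S_\rho$ precisely when $h$ extends analytically to $C_\rho=\overline{\BB{C}}\setminus(\BB{D}_{r_\rho}(a_\rho)\cup\BB{D}_{r_\rho}(\overline a_\rho))$, and the theorem follows.

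I also need to treat endpoint/regularity care: $F$ is defined a priori only on $(-\pi,\pi)$, and $f\in\CC{L}_2(\BB{R})$ only, so the equivalence "Fourier coefficients $=\O{\rho^{-|n|}}$ $\iff$ analytic continuation to the open strip" must be stated with the right hypotheses — one direction is the easy estimate (shift the contour of integration and bound), the other uses that the partial sums of a geometrically decaying Fourier series converge to an analytic function on the strip. I would phrase the analytic-continuation conclusion as: the function $\theta\mapsto F(\theta)$ agrees on $(-\pi,\pi)$ with a function holomorphic on $S_\rho$, equivalently $h=(1-2\ii t)f(t)$ agrees on $\BB{R}$ with a function holomorphic on $C_\rho$ (note $\BB{R}\subset C_\rho$ since the two excluded discs are centred off the real axis at $\pm a_\rho$ with $|{\Im a_\rho}|=\tfrac12\tfrac{\rho+\rho^{-1}}{\rho-\rho^{-1}}>\tfrac{1}{\rho-\rho^{-1}}=r_\rho$).

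The main obstacle I anticipate is purely the bookkeeping of the Möbius image computation — verifying that the circles $|w|=\rho^{\pm1}$ map under $x=\tfrac1{2\ii}\tfrac{w-1}{w+1}$ to circles with exactly the centre $a_\rho=\tfrac{\ii}{2}\tfrac{\rho+\rho^{-1}}{\rho-\rho^{-1}}$ and radius $r_\rho=\tfrac{1}{\rho-\rho^{-1}}$, and correctly identifying which side (disc vs.\ complement) is the image of the annulus. A clean way to do this without brute force is to use that a circle is determined by three points: track three convenient points of $|w|=\rho$ (e.g.\ $w=\rho,\ -\rho,\ \ii\rho$) through the map, and similarly for $|w|=\rho^{-1}$; alternatively, use that the map sends the unit circle $|w|=1$ to the real axis and is an isometry-like conjugate, then exploit the Schwarz reflection symmetry $w\mapsto1/\overline w\leftrightarrow x\mapsto\overline x$ to pin down the conjugate-disc structure with minimal computation. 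Everything else is standard.
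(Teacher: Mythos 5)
Your proposal is correct, but it is worth noting that the paper does not actually prove this theorem: its ``proof'' is a citation to Weideman and Boyd, so you are supplying the argument that those references contain rather than mirroring anything in the text. Your route is the standard (and right) one: by \R{expand} the coefficients are, up to the unimodular factor $(-\ii)^n$ and a constant, the Fourier coefficients of $F(\theta)=h\bigl(\frac12\tan\frac\theta2\bigr)$ with $h(t)=(1-2\ii t)f(t)$; geometric decay $\O{\rho^{-|n|}}$ is equivalent (via the Bernstein-type lemma, with the usual open-versus-closed-strip slack that the existential quantifier over $\rho$ absorbs) to analytic continuation of $F$ to the strip $|\mathrm{Im}\,\theta|<\log\rho$; and since $F$ is genuinely a function of $w=\ee^{\ii\theta}$ (indeed $\frac12\tan\frac\theta2=\frac{1}{2\ii}\frac{w-1}{w+1}$), this is the same as analyticity of $h$ on the image of the annulus $\rho^{-1}<|w|<\rho$ under the M\"obius map $w\mapsto x=\frac{w-1}{2\ii(w+1)}$. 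The bookkeeping you defer does close: the image circle of $|w|=\rho$ is symmetric under $x\mapsto-\overline{x}$, so the images of $w=\pm\rho$, namely $-\frac{\ii}{2}\frac{\rho-1}{\rho+1}$ and $-\frac{\ii}{2}\frac{\rho+1}{\rho-1}$, are diametrically opposite, giving centre $\overline{a}_\rho$ and radius $\frac{\rho}{\rho^2-1}=r_\rho$; the circle $|w|=\rho^{-1}$ maps to the conjugate disc centred at $a_\rho$; and since the pole $w=-1$ lies inside the annulus, the image is the complement on the Riemann sphere of the two closed discs, i.e.\ (up to boundary) $C_\rho$. Two points deserve the care you flag: the behaviour at $\theta=\pm\pi$, i.e.\ $w=-1$, corresponds exactly to analyticity of $h$ at $x=\infty$, which is why $\overline{\BB{C}}$ must appear in the statement; and in the direction ``decay $\Rightarrow$ continuation'' one should note that the locally uniformly convergent series on the strip agrees with $F$ on $(-\pi,\pi)$ because the Fourier series of the $\CC{L}_2$ function $F$ converges to it almost everywhere. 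With those remarks your sketch is a complete and correct proof.
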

\begin{proof}
  See \cite{weideman1995computing} and \cite{boyd1987spectral}.
\end{proof}

\begin{figure}[tb]
  \begin{center}
    \includegraphics[width=150pt]{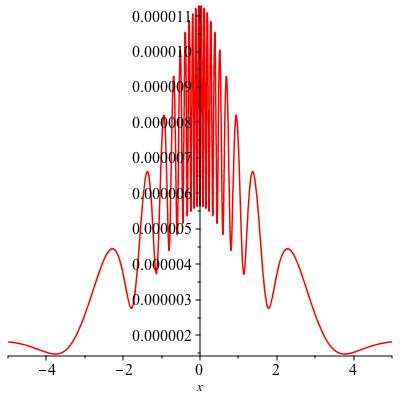}\hspace*{15pt}\includegraphics[width=150pt]{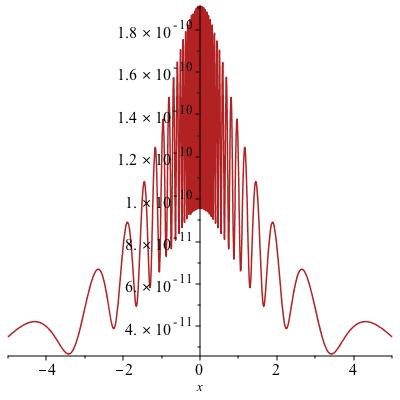}
    
    \includegraphics[width=150pt]{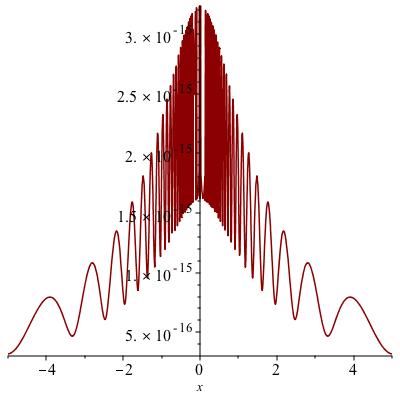}\hspace*{15pt}\includegraphics[width=150pt]{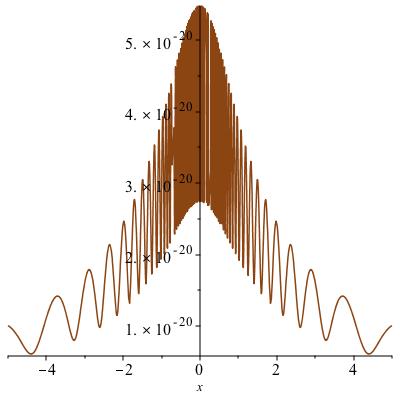}    
    \caption{MT errors for example \R{Example} with $N=10,20,30,40$.}
    \label{fig:3.2}
  \end{center}
\end{figure}

\begin{figure}[tb]
  \begin{center}
    \includegraphics[width=150pt]{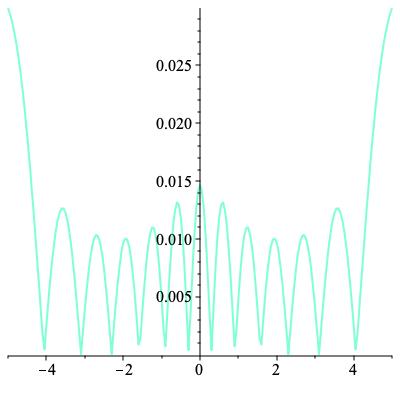}\hspace*{15pt}\includegraphics[width=150pt]{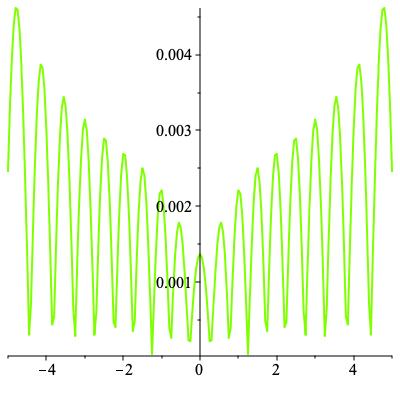}
    
    \includegraphics[width=150pt]{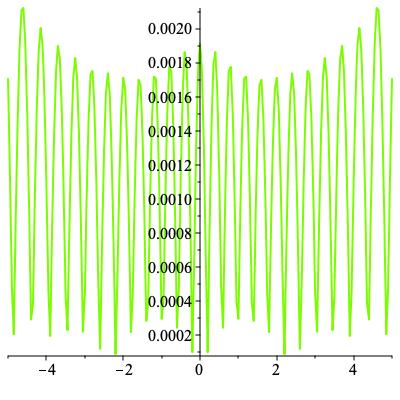}\hspace*{15pt}\includegraphics[width=150pt]{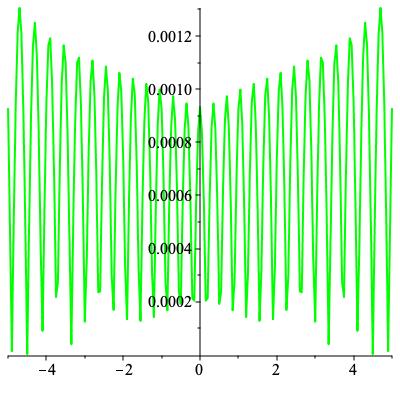}    
    \caption{Hermite function errors for example \R{Example} with $N=10,20,30,40$.}
    \label{fig:3.3}
  \end{center}
\end{figure}

As was noted by Weideman, for exponential convergence we require $f$ to be analytic at infinity, which is of meagre practical use. An example for a function $f$ of this kind is
\begin{equation}
  \label{Example}
  f(x)=\frac{1}{1+x^2}
\end{equation}
Since $f$ is a meromorphic function with singularities at $\pm\ii$, we obtain exponential decay with $\rho=3$ --  this is evident from the explicit expansion
\begin{displaymath}
  \frac{1}{1+x^2}=-\sqrt{2\pi}\sum_{n=-\infty}^{-1}\frac{(-\ii)^n}{3^{-n}} \varphi_n(x)+ \sqrt{2\pi}\sum_{n=0}^\infty \frac{(-\ii)^n}{3^{n+1}} \varphi_n(x),
\end{displaymath}
whose proof we leave to the reader. This is demonstrated in Fig.\ref{fig:3.2}, where we display the errors $\left|f(x)-\sum_{n=0}^N \hat{f}_n\varphi_n(x)\right|$ for $N=10,20,30$ and $40$. Compare this with Fig.~\ref{fig:3.3}, where we have displayed identical information for an expansion in Hermite functions. Evidently, MT errors decay at an exponential speed, while the error for Hermite functions decreases excruciatingly slowly as $N$ increases. 

\begin{figure}[tb]
  \begin{center}
    \includegraphics[width=150pt]{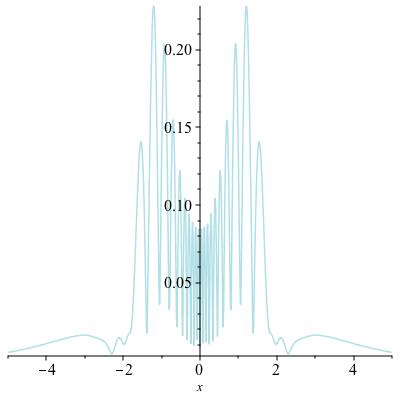}\hspace*{15pt}\includegraphics[width=150pt]{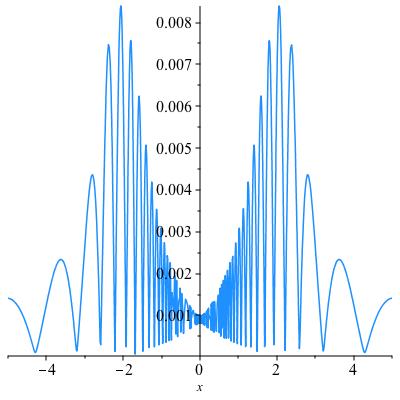}
    
    \includegraphics[width=150pt]{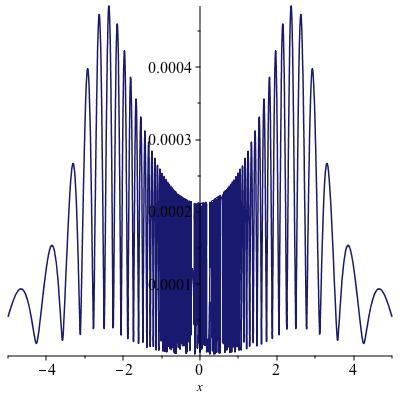}\hspace*{15pt}\includegraphics[width=150pt]{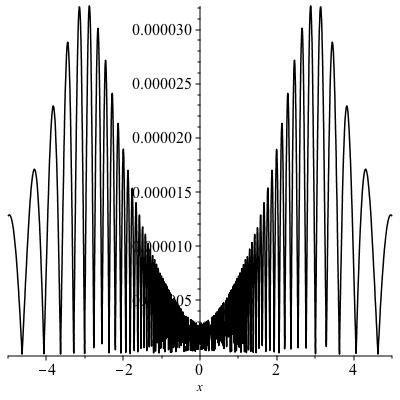}    
    \caption{MT errors for example \R{Fxample} with $N=10,40,70,100$.}
    \label{fig:3.4}
  \end{center}
\end{figure}

\begin{figure}[tb]
  \begin{center}
    \includegraphics[width=150pt]{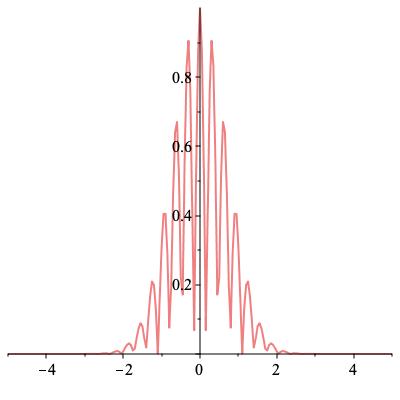}\hspace*{15pt}\includegraphics[width=150pt]{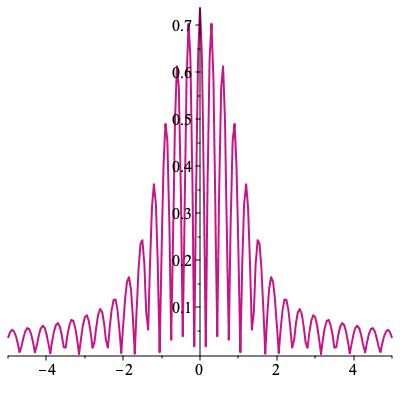}
    
    \includegraphics[width=150pt]{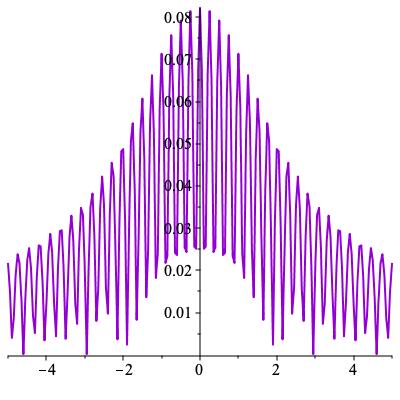}\hspace*{15pt}\includegraphics[width=150pt]{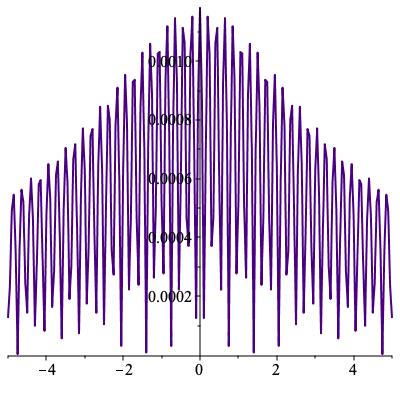}    
    \caption{Hermite function errors for example \R{Fxample} with $N=10,40,70,100$.}
    \label{fig:3.5}
  \end{center}
\end{figure}

\begin{table}[!htb]
  \centering
\caption{The rate of decay of the coefficients $\hat{f}_n$ in an MT approximation of different functions.}
\begin{tabular}{|c|c|}
  \hline & \\[-6pt]
  $\hspace*{12pt}f(x)\hspace*{12pt}$ & $\hat{f}_n$\\
   & \\[-6pt]
  \hline
  & \\[-6pt]
  $\displaystyle \frac{1}{1+x^4}$ & $\O{\rho^{-|n|}}$, $\rho=1+\sqrt{2}$\\[2pt]
    & \\[-6pt]
  $\ee^{-x^2}$ & $\O{\ee^{-3|n|^{2/3}/2}}$\\
    & \\[-6pt]
  $\CC{sech}\,x$ & $\O{\ee^{-2|n|^{1/2}}}$\\
    & \\[-6pt]
  $\displaystyle \frac{\sin x}{1+x^2}$ & $\O{|n|^{-5/4}}$\\
    & \\[-6pt]
  $\displaystyle \frac{\sin x}{1+x^4}$ & $\O{|n|^{-9/4}}$ \\[10pt]\hline
\end{tabular}
\label{tab:1}
\end{table}%

\begin{figure}[tb]
  \begin{center}
    \includegraphics[width=150pt]{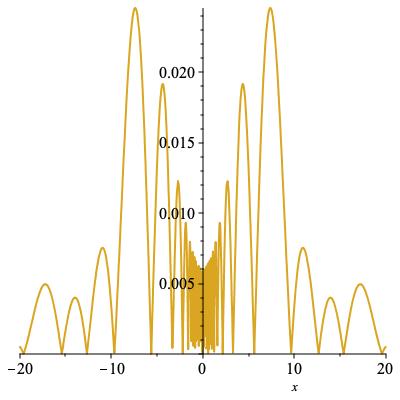}\hspace*{15pt}\includegraphics[width=150pt]{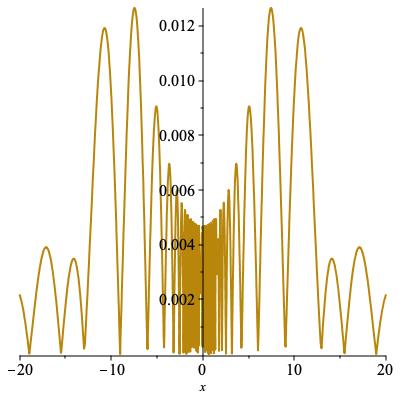}
    
    \includegraphics[width=150pt]{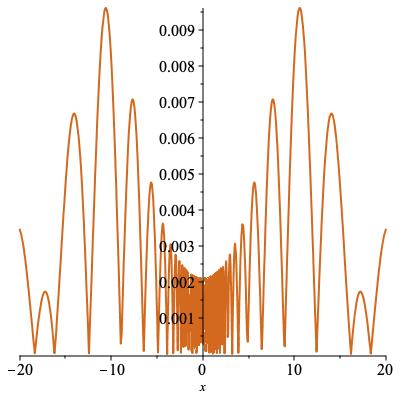}\hspace*{15pt}\includegraphics[width=150pt]{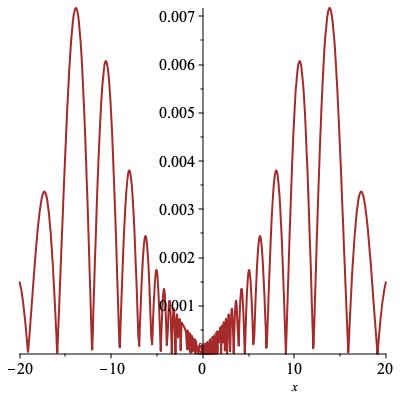}    
    \caption{MT errors for $f(x)=\sin x/(1+x^2)$ with $N=20,40,60,80$.}
    \label{fig:3.6}
  \end{center}
\end{figure}

Meromorphic functions, however, are hardly at the top of the agenda when it comes to spectral methods. In particular, in the case of dispersive hyperbolic equations we are interested in wave packets -- strongly localised functions, exhibiting double-exponential decay away from an envelope within which they oscillate rapidly. An example (with fairly mild oscillation) is the function
\begin{equation}
  \label{Fxample}
  f(x)=\ee^{-x^2}\cos(10 x).
\end{equation}
Since $f$ has an essential singularity at infinity, there is no $\rho>1$ so that \R{convergence} holds -- in other words, we cannot expect exponentially-fast convergence. We report errors for MT and Hermite functions in Figs~\ref{fig:3.4} and~\ref{fig:3.5} respectively for $N=10,40,70$ and $100$: definitely, the convergence of MT slows down (part of the reason is also the oscillation) but it still is  superior to Hermite functions. 

The general rate of decay of the error (equivalently, the rate of decay of $|\hat{f}_{|n|}|$ for $n\gg1$ for analytic functions and the MT system) is unknown, although \cite{weideman1995computing} reports interesting partial information, which we display in Table~\ref{tab:1} (taken from \cite{weideman1995computing}). The rate of decay does not seem to follow simple rules. For some functions the rate of decay is  spectral (faster than a reciprocal of any polynomial), yet sub-exponential. For  other functions it is polynomial (and fairly slow). Fig.~\ref{fig:3.6} exhibits MT errors for $f(x)=\sin x/(1+x^2)$ and $N=20,40,60,80$: evidently it is  in line with Table~\ref{tab:1}. It is fascinating that such a seemingly minor change to \R{Example} has such far-reaching impact on the rate of convergence. This definitely calls for further insight.

A future paper will address the rate of approximation of wave packets by both the MT basis and other approximation schemes.

\setcounter{equation}{0}
\setcounter{figure}{0}
\section{Characterisation of mapped and weighted Fourier bases}

The most pleasing feature of the MT basis is that the coefficients can be expressed as Fourier coefficients of a modified function. They can then be approximated using the Fast Fourier Transform. Are there other orthogonal systems like this? 

Let us consider all orthonormal systems $\Phi = \{\varphi_n\}_{n\in\BB{Z}}$ in $\CC{L}_2(\BB{R})$ with a tridiagonal skew-Hermitian differentiation matrix such that for all $f \in \CC{L}_2(\BB{R})$, the coefficients are equal to the classical Fourier coefficients of $k(\theta) f(h(\theta))$, $-\pi<\theta<\pi$, for some functions $k$ and $h$ (with a possible diagonal scaling by $\{\gamma_n\}_{n\in\BB{Z}}$). Specifically, we consider the {\em ansatz\/}
\begin{equation}
  \label{eqn:integralintheta}
  \langle f, \varphi_n \rangle = \gamma_n \int_{-\pi}^\pi \ee^{-\ii n \theta} k(\theta) f(h(\theta))  \, \mathrm{d}\theta,\qquad n\in\BB{Z}.
\end{equation} 
We assume that $h : (-\pi,\pi) \to \BB{R}$ is a differentiable function which is strictly increasing and onto, whose derivative is a measurable function. This implies the existence of a differentiable, strictly increasing inverse function $H : \BB{R} \to (-\pi,\pi)$. The chain rule implies $h'(\theta)H'(h(\theta)) \equiv 1$ (so that $H'$ is also a measurable function). The function $k$ is assumed to be a complex-valued $\CC{L}_2(-\pi,\pi)$ function (which makes the integral in \eqref{eqn:integralintheta} well defined). The constants $\gamma_n$ are complex numbers. We assume nothing more about $k$, $h$ and $\gamma_n$ in this section (but \emph{deduce\/} considerably more).

Making the change of variables $x = h(\theta)$ yields,
\begin{equation}
  \label{eqn:integralinx}
  \langle f, \varphi_n \rangle = \int_{-\infty}^{\infty} \gamma_n \ee^{-\ii n H(x)} k(H(x)) H'(x) f(x) \, \mathrm{d}x.
\end{equation}
For this to hold for all $f \in \CC{L}_2(\BB{R})$, we must have
\begin{equation}\label{eqn:FLform}
\varphi_n(x) = \overline{\gamma}_n K(x) \ee^{\ii n H(x)},
\end{equation}
where $K(x) = H'(x)\overline{k}(H(x))$.

How does this fit in with the MT basis? In the special case of Malmquist--Takenaka we have
\begin{eqnarray*}
  h(\theta) = \frac12 \mathrm{tan} \frac{\theta}{2}, &\qquad&  H(x) = 2\mathrm{tan}^{-1}(2x),\\
  k(\theta) = 1 - \ii\, \mathrm{tan} \frac{\theta}{2}, &\qquad&  K(x) = \sqrt{\frac{2}{\pi}}\frac{1}{1-2\ii x}, \\
  \gamma_n = (-\ii)^n .&&
  \end{eqnarray*}
 We prove the following theorem which characterises the Malmquist--Takenaka system as (essentially) the only one of the kind described by equation \eqref{eqn:FLform}.
\begin{theorem}
  \label{thm:characterize}
  All systems $\Phi = \{\varphi_n\}_{n\in\BB{Z}}$ of the form  \eqref{eqn:FLform}, such that
  \begin{enumerate}
    \item $\Phi$ is  orthonormal  in $\CC{L}_2(\BB{R})$,
    \item $\Phi$ has a tridiagonal skew-Hermitian differentiation matrix as in equation \eqref{eq:sHerm}, but indexed by all of $\BB{Z}$,
    \end{enumerate}
  are of the form
  \begin{equation}\label{eqn:generalFL}
    \varphi_n(x) = \gamma_n \sqrt{\frac{\left|\mathrm{Im}\lambda\right|}{\pi}} \ee^{\ii \omega x}  \frac{\left( \lambda-x \right)^{n+\delta}}{\left(\overline{\lambda}-x\right)^{n+\delta + 1}}
  \end{equation}
  where $\omega,\delta \in \BB{R}$, $\lambda \in \BB{C} \setminus \BB{R}$ and $\gamma_n \in \BB{C}$ such that $|\gamma_n| = 1$ for all $n \in \BB{Z}$. The differentiation matrix in the case where $\gamma_n = (-\ii)^n$, $\mathrm{Im}\lambda = \frac12$ and $\omega = 0$ has the terms
  \begin{equation}
    \label{eqn:bncndelta}
    b_n = n +  \delta + 1, \qquad c_n = 2(n+\delta) + 1, \qquad n \in \BB{Z}.
  \end{equation}
\end{theorem}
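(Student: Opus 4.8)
The plan is to start from the representation \eqref{eqn:FLform}, $\varphi_n(x) = \overline{\gamma}_n K(x) \ee^{\ii n H(x)}$, and impose the two hypotheses one at a time. First I would use orthonormality. Since $\langle \varphi_n, \varphi_m \rangle = \int |K(x)|^2 \ee^{\ii (n-m) H(x)}\,\D x = \delta_{n,m}$ (absorbing $|\gamma_n|$ once we show $|\gamma_n|=1$), the change of variables $\theta = H(x)$ turns this into $\int_{-\pi}^{\pi} |K(h(\theta))|^2 h'(\theta) \ee^{\ii(n-m)\theta}\,\D\theta = \delta_{n,m}$ for all $n,m \in \BB{Z}$. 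By uniqueness of Fourier coefficients this forces $|K(h(\theta))|^2 h'(\theta) \equiv \tfrac{1}{2\pi}$ almost everywhere, i.e. $|K(x)|^2 H'(x) \equiv \tfrac{1}{2\pi}$; in particular $|\gamma_n|$ must be constant, and rescaling lets us take $|\gamma_n| = 1$. So the modulus of $K$ and the map $H$ are rigidly linked: $|K(x)| = (2\pi H'(x))^{-1/2}$.

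Next I would bring in the differentiation relation \eqref{eq:sHerm}. Differentiating \eqref{eqn:FLform} gives $\varphi_n'(x) = \overline{\gamma}_n\big(K'(x) + \ii n H'(x) K(x)\big) \ee^{\ii n H(x)} = \left(\tfrac{K'(x)}{K(x)} + \ii n H'(x)\right)\varphi_n(x)$. On the other hand \eqref{eq:sHerm} expresses $\varphi_n'$ as a combination of $\varphi_{n-1}, \varphi_n, \varphi_{n+1}$; dividing through by $\varphi_n$ and using $\varphi_{n\pm1}/\varphi_n = (\gamma_n/\gamma_{n\pm1}) \ee^{\pm \ii H(x)}$, the right-hand side becomes $-\overline{b}_{n-1}\tfrac{\gamma_n}{\gamma_{n-1}} \ee^{-\ii H(x)} + \ii c_n + b_n \tfrac{\gamma_n}{\gamma_{n+1}} \ee^{\ii H(x)}$. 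Equating the two, for each fixed $n$ the function $\tfrac{K'(x)}{K(x)} + \ii n H'(x)$ must equal $A_n \ee^{-\ii H(x)} + \ii c_n + B_n \ee^{\ii H(x)}$ for constants $A_n,B_n$. Subtracting the identity for two different values of $n$ shows $\ii H'(x)$ is itself of the form $a \ee^{-\ii H(x)} + \ii d + b\ee^{\ii H(x)}$ for constants $a,b,d$ — an autonomous ODE for $H$. Writing $z = \ee^{\ii H(x)}$ this is a Riccati-type equation $z'/z = \ii H' $ being a quadratic in $z$, whose solution (up to the affine symmetries $\omega, \delta$ and the translation/scaling freedom in $x$ already present in \eqref{Affine}) is exactly the Möbius map $\ee^{\ii H(x)} = (\lambda - x)/(\overline{\lambda}-x)$ for some $\lambda \in \BB{C}\setminus\BB{R}$; the constraint that $H$ be real-valued, strictly increasing and onto $(-\pi,\pi)$ pins down that the two "poles" are a complex-conjugate pair and fixes the orientation. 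Then $H'(x) = 2|\mathrm{Im}\lambda|/|\overline{\lambda}-x|^2$, so $|K(x)|^2 = (2\pi H'(x))^{-1} = |\overline{\lambda}-x|^2/(4\pi|\mathrm{Im}\lambda|)$, giving $|K(x)| = |\overline{\lambda}-x|/(2\sqrt{\pi|\mathrm{Im}\lambda|})$ — wait, I must recheck the constant; in any case $|K|$ is determined up to the normalisation matching $\sqrt{|\mathrm{Im}\lambda|/\pi}$. The remaining phase of $K$ is then forced by feeding the solution back into the relation $K'/K = A_n \ee^{-\ii H} + \ii(c_n - nH'/\ldots) + B_n\ee^{\ii H}$: since the modulus is known, $\arg K$ solves a first-order linear ODE whose solution is $\ee^{\ii\omega x}$ times a power of $(\overline{\lambda}-x)$, and matching the power with the exponents already present in $\ee^{\ii n H(x)} = (\lambda-x)^n(\overline{\lambda}-x)^{-n}$ produces the shift $\delta$. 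Collecting everything yields \eqref{eqn:generalFL}. Finally, specialising $\gamma_n = (-\ii)^n$, $\mathrm{Im}\lambda = \tfrac12$, $\omega = 0$ reduces \eqref{eqn:generalFL} to the $\alpha$-shifted MT functions with $\alpha = 2\delta$, and one reads off $b_n, c_n$ in \eqref{eqn:bncndelta} either by direct differentiation (as done for the MT system earlier in the excerpt) or by transferring the Laguerre recurrence coefficients through \eqref{Affine}.

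The main obstacle is the middle step: extracting from \eqref{eq:sHerm} that $H$ satisfies an autonomous first-order ODE whose only admissible solutions (after quotienting by the known symmetry group) are the conjugate-pair Möbius maps, and then simultaneously solving for the phase of $K$ consistently for \emph{all} $n$ at once. One has to be careful that the constants $A_n, B_n$ produced for each $n$ are mutually compatible — i.e. that the $n$-dependence enters only through the explicit $\ii n H'(x)$ term — and that the regularity hypotheses on $h$ (differentiable, strictly monotone, onto, measurable derivative) are exactly enough to justify the Fourier-coefficient uniqueness argument and to rule out pathological non-smooth solutions of the ODE; establishing that $H$ is in fact smooth (bootstrapping from the ODE) is a small but necessary technical point. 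The verification of \eqref{eqn:bncndelta} at the end is routine given the earlier MT computation.
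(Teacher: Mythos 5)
Your proposal follows essentially the same route as the paper's proof: orthonormality under the change of variables $\theta = H(x)$ forces $|K|$ and $H'$ to be rigidly linked and $|\gamma_n|\equiv 1$; substituting \eqref{eqn:FLform} into \eqref{eq:sHerm} and comparing the relations for different $n$ yields an autonomous first-order ODE for $H$ whose only admissible solution (given that $H$ is increasing and onto $(-\pi,\pi)$, so $H'(\pm\infty)=0$) is $H(x)=2\arctan(Ax+B)$, after which the phase of $K$ is recovered by integrating the $n=0$ relation — exactly the paper's structure, including the gauge-fixing of the $\gamma_n$ and the final verification of \eqref{eqn:bncndelta} by the subsection~3.2 computation with $n\mapsto n+\delta$. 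One slip to correct: from $|K(h(\theta))|^2h'(\theta)\equiv\frac{1}{2\pi}$ and $h'(\theta)=1/H'(h(\theta))$ you should get $|K(x)|^2=\frac{H'(x)}{2\pi}$, not $|K(x)|^2H'(x)\equiv\frac{1}{2\pi}$; with the correct version $|K(x)|=\sqrt{|\mathrm{Im}\,\lambda|/\pi}\,|\overline{\lambda}-x|^{-1}$, which decays and matches \eqref{eqn:generalFL}, whereas your reciprocal would grow and leave $\CC{L}_2(\BB{R})$. Also note the paper obtains smoothness of $K$ and $H$ not by bootstrapping from the ODE but from Theorem~\ref{thm:nonorthogonalPhi} ($\varphi_0$ is the inverse Fourier transform of a superalgebraically decaying function), which avoids the circularity of needing regularity before the ODE can be derived.
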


\begin{proof}
  Let us derive some necessary consequences of orthonormality of $\Phi$ by applying the change of variables $x = h(\theta)$ to the inner product.
  \begin{eqnarray}
  \int_{-\infty}^\infty \overline{\varphi}_n(x)\varphi_m(x) \, \mathrm{d} x &=& \gamma_n \overline{\gamma}_m\int_{-\infty}^\infty |K(x)|^2 \ee^{\ii(m-n)H(x)} \,\mathrm{d} x \\
  &=& \gamma_n \overline{\gamma}_m\int_{-\pi}^\pi h'(\theta) |K(h(\theta))|^2 \ee^{\ii(m-n)\theta} \, \mathrm{d}\theta.
  \end{eqnarray}
Orthogonality implies that the function $\theta \mapsto h'(\theta) |K(h(\theta))|^2$ is orthogonal to $\theta \mapsto \ee^{\ii k \theta}$ for all $k \in \BB{Z} \setminus \{0\}$. It is therefore a constant function. This constant is positive since $h$ is strictly increasing and $K$ is not identically zero. Normality of the basis implies that $|\gamma_n|^2 = \left[2\pi h'(\theta) |K(h(\theta))|^2 \right]^{-1}$, which is a constant independent of $n$. We can absorb this constant into $K$ and assume that $|\gamma_n| = 1$ for all $n \in \BB{Z}$. Therefore, $h'(\theta)|K(h(\theta))|^2 \equiv 1/(2\pi)$, which is equivalent to $|K(x)|^2 = H'(x)/(2\pi)$.

Since $\varphi_0(x) = \gamma_0 K(x)$ and $\varphi_0$ is infinitely differentiable (because it is proportional to the inverse Fourier transform of a superalgebraically decaying function $g$), we deduce that $K$ must be infinitely differentiable. The relationship $H'(x) = 2\pi |K(x)|^2$ therefore implies that $H$ is infinitely differentiable; in particular $H''(x) = 4\pi \Re\!\left[K'(x) \overline{K}(x)\right]$. Furthermore, there exists an infinitely differentiable function $\kappa : \BB{R} \to \BB{R}$ such that 
\begin{displaymath}
  K(x) = \ee^{\ii\kappa(x)}\sqrt{\frac{H'(x)}{2\pi}}.
\end{displaymath}

Let us derive more necessary consequences by taking into account the tridiagonal skew-Hermitian differentiation matrix. For all $n\in\BB{Z}$,
\begin{eqnarray*}
& & K'(x)\gamma_n\ee^{\ii n H(x)} + K(x) \gamma_n\ii n H'(x) \ee^{\ii n H(x)} \\
& & = -\overline{b}_{n-1}K(x) \gamma_{n-1} \ee^{\ii (n-1) H(x)} + \ii c_n \gamma_n K(x)\ee^{\ii n H(x)} + b_{n} K(x) \gamma_{n+1}\ee^{\ii (n+1) H(x)}.
\end{eqnarray*}
Note that $K'(x) = \left[\ii\kappa'(x) + \frac{H''(x)}{2H'(x)}\right] \!K(x)$, so dividing through by $K(x)\gamma_n\ii \ee^{\ii n H(x)}$ leads to
\begin{displaymath}
  \kappa'(x) = c_n - n H'(x) + \overline{\beta}_{n-1} \ee^{-\ii H(x)} + \beta_n \ee^{\ii H(x)} + \ii \frac{H''(x)}{2H'(x)},
\end{displaymath}
where $\beta_n = -\ii b_n \gamma_{n+1}/\gamma_n$ (here we use the fact that $\gamma_n^{-1} = \overline{\gamma}_n$). Without loss of generality, we can assume that $\beta_n \in\BB{R}$ for all $n$ because the symmetries discussed in subsection \ref{subsec:symmetries} allow us to choose $\{\gamma_n\}_{n\in\BB{Z}}$ (because they are all of the form $\ee^{\ii \kappa_n}$ for real numbers $\{\kappa_n\}_{n\in\BB{Z}}$).

Since $\kappa$ and $H$ are real-valued functions and $c_n$ and $\beta_n$ are real for all $n \in \BB{Z}$,  equating real and imaginary parts yields
\begin{eqnarray}
  \kappa'(x) &=& c_n - nH'(x) + (\beta_n + \beta_{n-1})\cos H(x) \label{eqn:firstrequirement} \\
           0 &=& (\beta_{n} - \beta_{n-1}) \sin H(x) + \frac{H''(x)}{2H'(x)} \label{eqn:secondrequirement}
\end{eqnarray}
It follows that $\beta_{n}-\beta_{n-1}$ is a constant which is independent of $n$, so we can write $a = 2(\beta_{n} - \beta_{n-1})$ for some real constant $a$ and equation \eqref{eqn:secondrequirement} becomes
\begin{displaymath}
  H''(x) = -a H'(x)\sin H(x),
\end{displaymath}
which, after integrating with respect to $x$, becomes
\begin{displaymath}
  H'(x) = a\cos H(x) + b
\end{displaymath}
for some real constant $b$. Since $H$ maps $\BB{R}$ onto $(-\pi,\pi)$ in a strictly increasing manner, by the monotone convergence theorem we must have $H'(\pm \infty) = 0$. Since $H'(\pm \infty) = a \cos(\pm \pi) + b$, we must have $a = b$. Therefore, using the formula $\cos\theta+1 = 2\cos^2(\theta/2)$, we obtain,
\begin{displaymath}
  \frac12 H'(x)\sec^2\frac{H(x)}{2}  = a.
\end{displaymath} 
Integrating with respect to $x$, we get
\begin{displaymath}
  \tan \frac{H(x)}{2}= a x + c
\end{displaymath}
for some real constant $c$. Hence there exist real constants $A$ and $B$ such that
\begin{equation}
H(x) = 2 \arctan(Ax+B).
\end{equation}
Note that necessarily $A \neq 0$. All that remains is to determine $K(x)$, which can be done by determining $\kappa(x)$. Taking $n=0$ in equation \eqref{eqn:firstrequirement} gives us
\begin{equation}
\kappa'(x) = c_0 + (\beta_0 + \beta_{-1})\cos(2\arctan(Ax+B)).
\end{equation}
The antiderivative of $\cos(2\arctan(Ax+B))$ is $2A^{-1}\arctan(2x) - x$, so there exist real constants $\omega$, $a$ and $b$ such that
\begin{displaymath}
  \kappa(x) = \omega x + 2a\arctan(Ax+B) + b.
\end{displaymath}
Whence we deduce that
\begin{equation}
K(x) = \sqrt{\frac{A}{\pi}}\ee^{\ii \omega x + \ii 2a\arctan(Ax+B) + \ii b}\frac{1}{\sqrt{1+(Ax+B)^2}}
\end{equation}
Using $\exp(\ii 2 \arctan(Ax+B)) = (1+\ii(Ax+B))/(1-\ii(Ax+B)) = -(\lambda - x)/(\overline{\lambda} - x)$, where $\lambda = A^{-1}(\ii-B) \in \BB{C}\setminus\BB{R}$, we deduce
\begin{eqnarray}
\varphi_n(x) &=& \sqrt{\frac{A}{\pi}}\ee^{\ii \omega x + \ii b}(-1)^{n+a}  \frac{1}{\sqrt{1+(Ax+B)^2}} \frac{\left(\lambda+x\right)^{n+a}}{\left(\overline{\lambda} - x\right)^{n+a}}\\
             &=& \gamma_n \sqrt{\frac{|\mathrm{Im}\lambda|}{\pi}} \ee^{\ii \omega x} \frac{\left(\lambda+x\right)^{n+a-\frac12}}{\left(\overline{\lambda} - x\right)^{n+a+\frac12}},
\end{eqnarray}
where $\gamma_n = \ee^{\ii b} (-1)^{n+a}$. Letting $\delta = a - \frac12$ shows that the system $\Phi$ must necessarily be of the form in equation \eqref{eqn:generalFL}. To complete the proof we must turn to the question of sufficiency. A derivation exactly as in subsection~3.2 but with $n$ replaced by $n+\delta$ verifies the explicit form of the coefficients \eqref{eqn:bncndelta} for the case $\gamma_n = (-\ii)^n$, $\lambda = \ii/2$ and $\omega = 0$. The symmetry considerations in subsection \ref{subsec:symmetries} show that the other values of $\gamma_n$, $\lambda$ and $\omega$ yield orthonormal systems with a tridiagonal skew-Hermitian differentiation matrix too.
\end{proof}

\section{Concluding remarks}

The subject matter of this paper is the theory of complex-valued orthonormal systems in $\CC{L}_2(\BB{R})$ with a tridiagonal, skew-Hermitian differentiation matrix. On the face of it, this is a fairly straightforward generalisation of the work of \cite{iserles18osssd}. Yet, the more general setting confers important advantages. In particular, it leads in a natural manner to the Malmquist--Takenaka system. The latter is an orthonormal system of rational functions, which we have obtained from Laguerre polynomials through the agency of the Fourier transform. The MT system has a number of advantages over, say, Hermite functions, which render it into a natural candidate for spectral methods for the discretization of differential equations on the real line. It allows for an easy calculus, because MT expansions can be straightforwardly multiplied. Most importantly, the calculation of the first $N$ expansion coefficients can be accomplished, using FFT, in $\O{N\log_2N}$ operations. Moreover, the MT system is essentially unique in having the latter feature. 

The FFT, however, is not the only route toward `fast' computation of coefficients in the context of orthonormal systems on $\CC{L}_2(\BB{R})$ with skew-Hermitian or skew-symmetric differentiation matrices. In  \cite{iserles19fao} we characterised all such real systems (thus, with a skew-symmetric differentiation matrix) whose coefficients can be computed with either Fast Cosine Transform, Fast Sine Transform or a combination of the two, again incurring an $\O{N\log_2N}$ cost. We prove there that there exist exactly four systems of this kind.

The connections laid out in Section 3 between the Fourier--Laguerre functions and the Szeg\H{o}--Askey polynomials (and hence Jacobi polynomials via the Delsarte--Genin transformation), are suggestive of a possible generalisation of Theorem \ref{thm:characterize} on the characterisation of the MT basis. It may be possible to characterise all systems which are orthonormal, have a tridiagonal skew-Hermitian differentiation matrix, and which are of the form
\begin{displaymath}
\varphi_n(x) = \Theta(x) \Pi_n\!\left(\ee^{\ii H(x)}\right)\!,
\end{displaymath}
where $\Theta \in \CC{L}_2(\BB{R})$, $H$ maps the real line onto $(-\pi,\pi)$, and $\{\Pi_n\}_{n\in\BB{Z}_+}$ is a system of orthogonal polynomials on the unit circle. The expansion coefficients for a function in such a basis are equal to expansion coefficients of a mapped and weighted function in the orthogonal polynomial basis $\{\Pi_n\}_{n\in\BB{Z}_+}$. The Fourier--Laguerre bases, in particular the MT basis, are certainly within this class of functions, but one can ask if there are more.

From a practical point of view, it is worth noticing that while the MT basis elements decay like $|x|^{-1}$ as $x \to \pm\infty$, the Fourier-Laguerre functions decay like $|x|^{-1-\alpha/2}$ where $\alpha > -1$ is the parameter in the generalised Laguerre polynomial. For the approximation of functions with a known asymptotic decay rate it may be advantageous to use a basis with the same decay rate.

The jury is out on which is the `best' orthonormal $\CC{L}_2(\BB{R})$ system with a skew-Hermitian (or skew-symmetric)  tridiagonal differentiation matrix and whose first $N$ coefficients can be computed in $\O{N\log_2N}$ operations. While some considerations have been highlighted in  \cite{iserles19fao}, probably the most important factor is the speed of convergence. Approximation theory in $\CC{L}_2(\BB{R})$ is poorly understood and much remains to be done to single out optimal orthonormal systems for different types of functions. Partial results, e.g.\ in \cite{ganzburg18eeb,weideman95tao}, indicate that the speed of convergence of such systems is a fairly delicate issue.

\section*{Acknowledgements}

The authors with to acknowledge helpful discussions with  Adhemar Bultheel (KU Leuven), Margit Pap (P\'ecs) and Andr\'e Weideman (Stellenbosch).

\bibliographystyle{agsm}
\bibliography{19019}

\end{document}